\def\MyNewTheorem#1[#2]#3{%
  \newaliascnt{#1}{#2}
  \newtheorem{#1}[#1]{#3}
  \aliascntresetthe{#1}
  \expandafter\newcommand\csname #1autorefname\endcsname{#3}
}
\newtheorem*{rep@theorem}{\rep@title}
\newcommand{\newreptheorem}[2]{%
\newenvironment{rep#1}[1]{%
 \def\rep@title{#2 \ref{##1}}%
 \begin{rep@theorem}}%
 {\end{rep@theorem}}}
\newtheorem{theorem}{Theorem}
\newtheorem{question}{Question} 
\theoremstyle{definition}
\numberwithin{equation}{section}
\def\equationautorefname~#1\null{(#1)\null}
\def\itemautorefname~#1\null{#1\null}
\title{Trisections of $4$--manifolds with Boundary}
\author{Nickolas A. Castro \and David T. Gay \and Juanita Pinz\'on-Caicedo}
\thanks{The first author is supported by the UC Davis Chancellor's Postdoctoral Fellowship. The second author was supported by grant \#359873 from the Simons Foundation. The second and third author are supported by NSF grant DMS-1664567.}
\begin{document}
\begin{abstract}
Given a handle decomposition of a 4-manifold with boundary, and an open book decomposition of the boundary, we show how to produce a trisection diagram of a trisection of the 4-manifold inducing the given open book. We do this by making the original proof of the existence of relative trisections more explicit, in terms of handles. Furthermore, we extend this existence result to the case of 4-manifolds with multiple boundary components, and show how trisected 4-manifolds with multiple boundary components glue together.
\end{abstract}

\maketitle

% If your first paragraph (i.e. with the \dropcap) contains a list environment (quote, quotation, theorem, definition, enumerate, itemize...), the line after the list may have some extra indentation. If this is the case, add \parshape=0 to the end of the list environment.
When developing the foundations of the study of trisections of $4$--manifolds in~\cite{gk}, Gay and Kirby briefly discussed the case of $4$--manifolds with connected boundary, in which the corresponding boundary data are open book decompositions on $3$--manifolds. Castro~\cite{castro} developed this case further, in particular demonstrating the importance of this case by showing how to glue two trisected $4$--manifolds along a common boundary to produce a trisection of a closed manifold. The current authors then worked out~\cite{cgp} the diagrammatic version of this theory, with the diagrammatic version of gluing appearing in Castro's work with Ozbagci~\cite{co}. This paper supplements these papers by first showing, through examples and a careful exposition of the existence proof in~\cite{gk}, how to explicitly turn a handle decomposition of a $4$--manifold together with the data of a given open book on the boundary into a trisection of the $4$--manifold described diagrammatically. Then we extend the definitions to include trisections of $4$--manifolds with multiple boundary components and discuss the corresponding theorems in that case. We end by implementing the method discussed earlier for turning handle decompositions into diagrams in this multiple boundary setting, showing how to get trisection diagrams for product cobordisms.

%%%%%%%%%%%%%%%%%%%%%%%%%%%%%%%%%
%%			Preliminaries				%%
%%%%%%%%%%%%%%%%%%%%%%%%%%%%%%%%%

%%%%%%%%%%%%%%%%%%%%%%%%%%%%%%%%%
%%			Kirby diagram to Trisection				%%
%%%%%%%%%%%%%%%%%%%%%%%%%%%%%%%%%
\begin{section}{Transforming Kirby Diagrams into Relative Trisection Diagrams}
%\section{Kirby Diagrams and Open Book Decompositions}

An open book decomposition of a closed, oriented, connected 3--manifold $Y$ is an oriented link $\Lambda\subset Y$ and a fibration $f:Y\setminus \Lambda \to S^1$ such that, for each $t \in S^1$ we get a compact oriented surface $P_t = f^{-1}(t)$ with $\partial P_t = \Lambda$. If $Y = \partial X$ for a compact, connected $4$--manifold $X$, there is a notion defined in~\cite{gk} of a {\em relative trisection} of $X$ inducing the given open book on $Y$. In \autoref{sec:basicdefs} below we give a more general version of this definition in the case of multiple boundary components for $X$, but for the purposes of this section we take the definition as understood, and the reader unfamiliar with the definition will gain an understanding by example, as preparation for \autoref{sec:basicdefs}. (Such a reader may also choose to begin by reading \autoref{sec:basicdefs}.) In~\cite{cgp}, the present authors show how to diagrammatically characterize trisections on $4$--manifolds with connected boundary and how to understand the open book on the boundary. 
\begin{question} \label{question}
 Given a handle decomposition of a compact $4$--manifold $X$ with boundary $Y$, involving a single $0$--handle, some $1$--, $2$-- and $3$--handles, described diagrammatically via a Kirby diagram (which is thus also a surgery diagram for $Y$), and given an open book decomposition of $Y$ described by explicitly drawing a single page in the surgery diagram for $Y$, how does one produce a trisection diagram for $X$ inducing the given open book decomposition on $Y$?
\end{question}
We answer this by giving a detailed proof paired with an extended example of the basic existence theorem:
\begin{theorem}[Gay-Kirby~\cite{gk}]
 Given an open book decomposition of $Y = \partial X$ there exists a relative trisection of $X$ inducing the given open book. (Here $Y$ and $X$ are connected.)
\end{theorem}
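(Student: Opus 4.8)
The plan is to take a handle decomposition of $X$ and reorganise it, one handle at a time, into the three sectors of a relative trisection, using the prescribed open book to dictate how each sector meets $Y$. This is the explicit, handle-by-handle form of the Morse $2$--function argument of~\cite{gk}, in which one instead builds a generic map $X\to\R^2$ restricting to the open book on $Y$ and homotopes it until its only singularities are indefinite folds and cusps, the three sectors then being the preimages of three sectors of the target. First I would isotope the underlying Morse function so that $X$ has a single $0$--handle (using that $X$ is connected), no $4$--handle (which we may assume since $Y\ne\emptyset$), and handles attached in order of index: one $0$--handle, then $a$ handles of index $1$, then $b$ of index $2$, then $c$ of index $3$. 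Provisionally I would put the $0$-- and $1$--handles into $X_1$, the $2$--handles into $X_2$, and the $3$--handles into $X_3$; then $X_1\cong\natural^{a}(S^1\times B^3)$ is already a $4$--dimensional $1$--handlebody, so the genuine work lies in repairing $X_2$ and $X_3$ and in distributing a collar of $Y$ among the three sectors in accordance with the open book.

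The crux is the middle sector. The $2$--handles are attached to $\partial X_1=\#^{a}(S^1\times S^2)$ along a framed link $L$, and a collection of $2$--handles is in general neither a $1$--handlebody nor does it cut out a compression body between Heegaard surfaces. To repair it I would first push $L$ onto a Heegaard surface $\Sigma$ of $\partial X_1$: after inserting finitely many cancelling pairs consisting of a $1$--handle and a $2$--handle (which changes neither $X$ nor the open book), every component of $L$ can be slid onto a sufficiently stabilised Heegaard surface, and after a few more stabilisations framed by $\Sigma$. With $L\subset\Sigma$, the trace of the $2$--handles sits inside a bicollar of $\Sigma$ in $X$; I would cut this trace along a parallel copy of $\Sigma$ pushed into $X$ so that it becomes a genuine $1$--handlebody $X_2$, simultaneously introducing cancelling pairs of a $2$--handle and a $3$--handle so that $X_3$ — built from the original $3$--handles, the new ones, and part of the collar of $Y$ — is also a $1$--handlebody, and so that each pairwise intersection $X_i\cap X_j$ is a $3$--dimensional compression body with common central surface $\Sigma$. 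In the language of Morse $2$--functions this is exactly the step of trading definite folds for indefinite folds at the cost of extra cusps.

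It then remains to match the open book. The collar $Y\times[0,1]\subset X$ carries the given open book $(\Lambda,f)$, whose page $P$ is drawn explicitly in the surgery diagram. Working in the handle structure near $Y$ — which, dually, is $Y\times[0,1]$ with $2$--handles (duals of the original $2$--handles) and $3$--handles (duals of the original $1$--handles) attached — I would identify $P$ with a subsurface of $\Sigma$ and $\Lambda$ with an arc system on $\Sigma$, arranging that $Y\cap X_i$ is, for each $i$, a union of pages of $(\Lambda,f)$ glued by the monodromy. Finally I would verify, using the diagrammatic criterion of~\cite{cgp} for when three cut systems on a bounded surface define a relative trisection, that $X=X_1\cup X_2\cup X_3$ is a relative trisection and that it induces precisely $(\Lambda,f)$ on $Y$; the extended worked example accompanying the proof should carry out every step of this verification in a concrete case, which is where most of the bookkeeping lives.

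The main obstacle is carrying out the repair of $X_2$ compatibly with the open book. The moves that turn the $2$--handle trace into a $1$--handlebody — sliding attaching links onto a stabilised surface, trading parts of $2$--handles against $3$--handles, inserting cancelling handle pairs — must all be performed in the interior of $X$, away from the collar of $Y$, so that the open book on the boundary is left untouched; and one must track, through all of this, how $\Sigma$, the three cut systems, and the arc system describing $\Lambda$ evolve, so that the final diagram genuinely encodes the stated open book rather than a modification of it. Since the theorem only asserts the existence of \emph{some} relative trisection inducing $(\Lambda,f)$, it is enough to check that none of these moves alters the boundary open book; the genus and the handle numbers of the resulting trisection are then simply whatever the amount of stabilisation forces them to be.
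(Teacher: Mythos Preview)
Your broad outline --- put the $0$-- and $1$--handles in $X_1$, push the $2$--handle link onto a stabilised Heegaard surface, thicken one side plus the $2$--handles to form $X_2$, and take the rest as $X_3$ --- is indeed the skeleton of the paper's argument. The gap is in how and \emph{when} the open book enters. You defer it to the end (``It then remains to match the open book\ldots I would identify $P$ with a subsurface of $\Sigma$''), but at that point your $\Sigma$ is a \emph{closed} Heegaard surface of $\partial X_1\cong\#^a(S^1\times S^2)$, whereas the central surface of a relative trisection must have boundary equal to $\partial P$; and your $X_1$, being simply the union of the $0$-- and $1$--handles, need not meet $\partial X$ at all, so there is no mechanism to force $X_i\cap\partial X\cong I\times P$. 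The sentence ``arranging that $Y\cap X_i$ is, for each $i$, a union of pages'' is the whole content of the theorem, and your proposal does not say how to achieve it.

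The paper's key move is to do this \emph{first}: one introduces cancelling $1$--$2$ pairs not to push $L$ onto a surface, but to make the handle decomposition \emph{compatible} with the open book, meaning the $0$--handle together with certain $1$--handles already forms a neighbourhood $[-1,1]^2\times P$ of the page, with all remaining handles attached along the interior copy $\{1\}\times[-1,1]\times P$. This guarantees from the outset that $X_1\cap\partial X=Y_-\cong I\times P$, and the relevant Heegaard splitting is then a \emph{sutured} splitting of $\partial_{\mathrm{int}}X_1$ whose surface $F$ has $\partial F=\partial P$ built in. Resolving the crossings of $L$ is then done by stabilising this sutured splitting (a $3$--dimensional operation), not by adding $4$--dimensional cancelling pairs; and $X_3$ is seen to be a $1$--handlebody by turning the relative decomposition upside down, with no need for your $2$--$3$ cancelling pairs. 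Once you front-load the open book via compatibility, the rest of your sketch goes through essentially as you wrote it.
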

We show that this existence theorem can in fact be made explicit so as to produce a diagram for the trisection, in the sense of  \autoref{question}.

\begin{proof}

We begin with an example of the given data in \autoref{question}. For $X$ we take the complement of the standard torus in $S^4$, and for the open book on $Y=T^3$ we take a standard open book with a twice punctured torus as the page, and monodromy given by a right handed Dehn twist parallel to one boundary component and a left handed twist parallel to the other boundary component. \autoref{fig::T2complementKirby} shows a Kirby diagram for $X$, once drawn with dotted circle notation and once with ball notation for the $1$--handles.

\begin{figure}[h]
\centering
\def\svgwidth{0.2\textwidth}

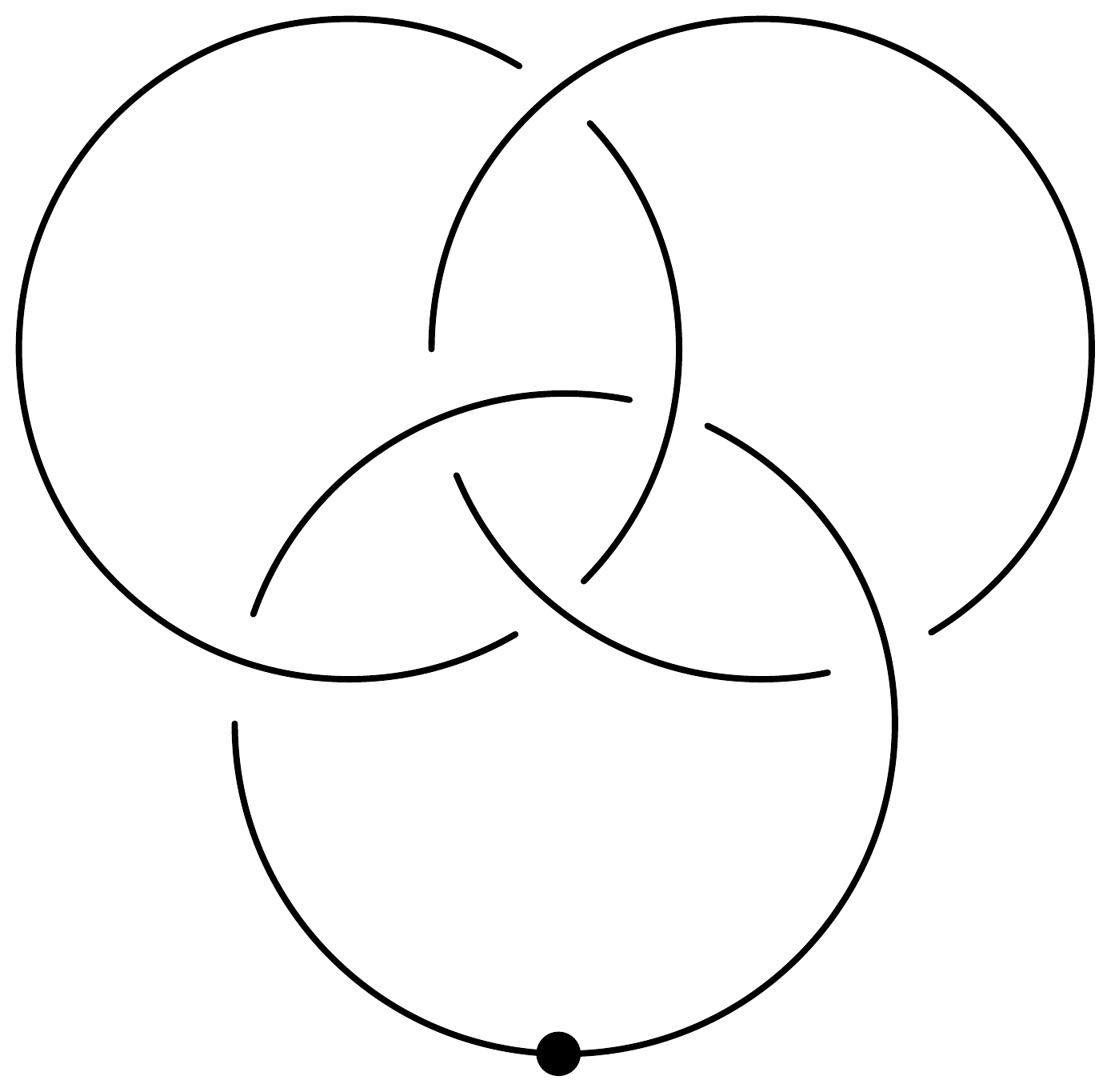
\def\svgwidth{0.35\textwidth}
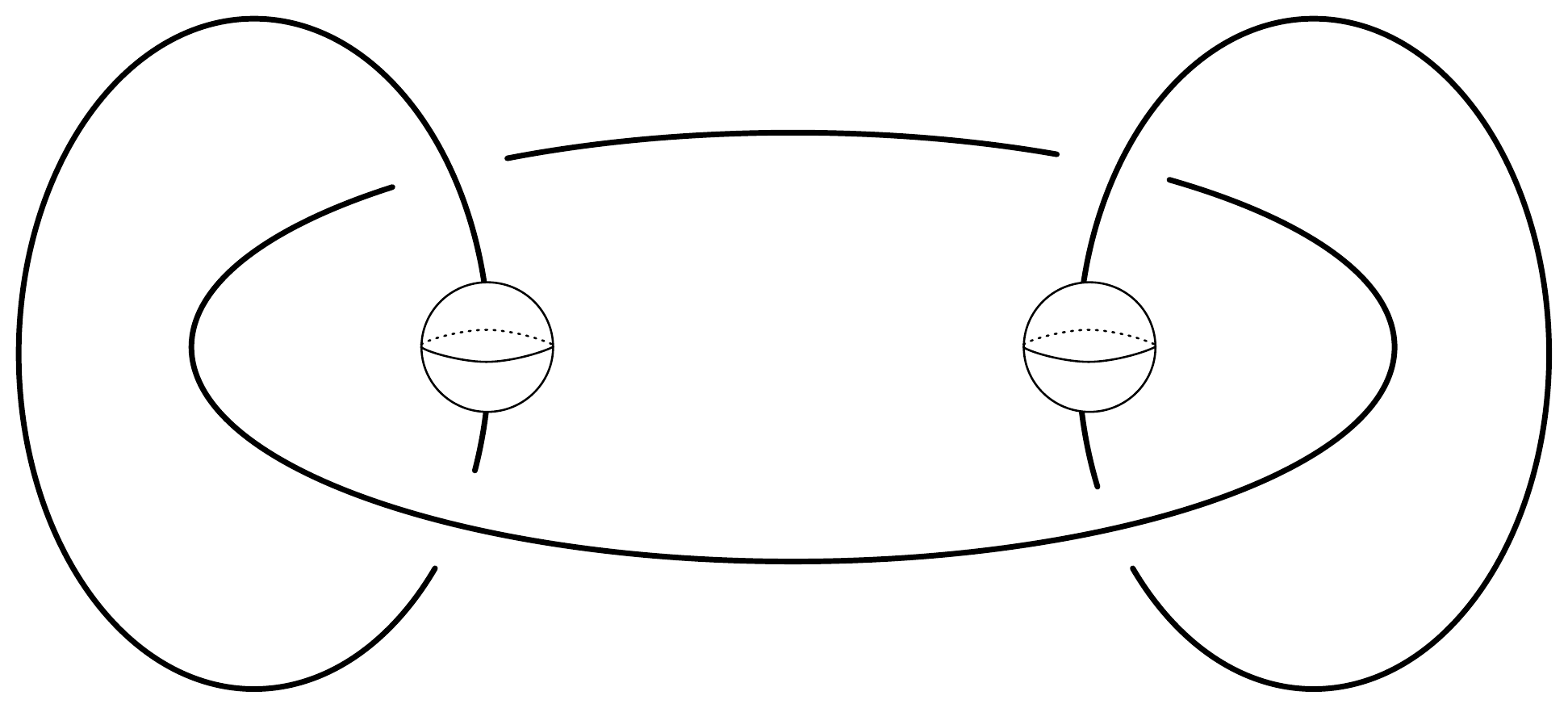
\caption{Kirby diagram for a handle decomposition of $X=S^4\setminus N(T^2)$ with one $0$--handle, one $1$--handle and two $2$--handles. The $2$--handles are $0$--framed. \label{fig::T2complementKirby}}
\end{figure}

We need to augment this diagram with an explicit embedding of a page of the open book decomposition we have in mind.
The top diagram in \autoref{fig::fibration} shows a generic fiber $F$ of the fibration of $T^3$ over $T^2$, and its location within the surgery description of $\partial X$. This fibration structure can be transformed into the open book under consideration via the three component fibered link $C_0\amalg C_1\amalg C_2$ obtained by plumbing a positive Hopf band to a negative Hopf band along a boundary parallel arc. To be precise, we remove a disk $D$ with boundary from the fiber $F$ of the fibration and then realize $T^3$ as 
$$(F\setminus D) \times S^1) \underset{\partial D=C_0}{\cup} (S^3\setminus \nu(C_0)).$$ 
This decomposition gives the right open book for $T^3$. The page $P$ for this open book embedded into the surgery diagram is shown in the middle diagram in \autoref{fig::fibration}, and an ambient isotopy yields the final diagram. Note that the final diagram depicts a handle decomposition of $P$ involving one $0$--handle and three $1$--handles; this takes us closer to a ``planar diagram'' of the page which will be convenient at the next stage.

\begin{figure}[h]
\centering
\def\svgwidth{0.3\textwidth}
\input{kirby_fibration.pdf_tex}
\def\svgwidth{0.3\textwidth}
\input{kirby_open_book.pdf_tex}
\def\svgwidth{0.3\textwidth}
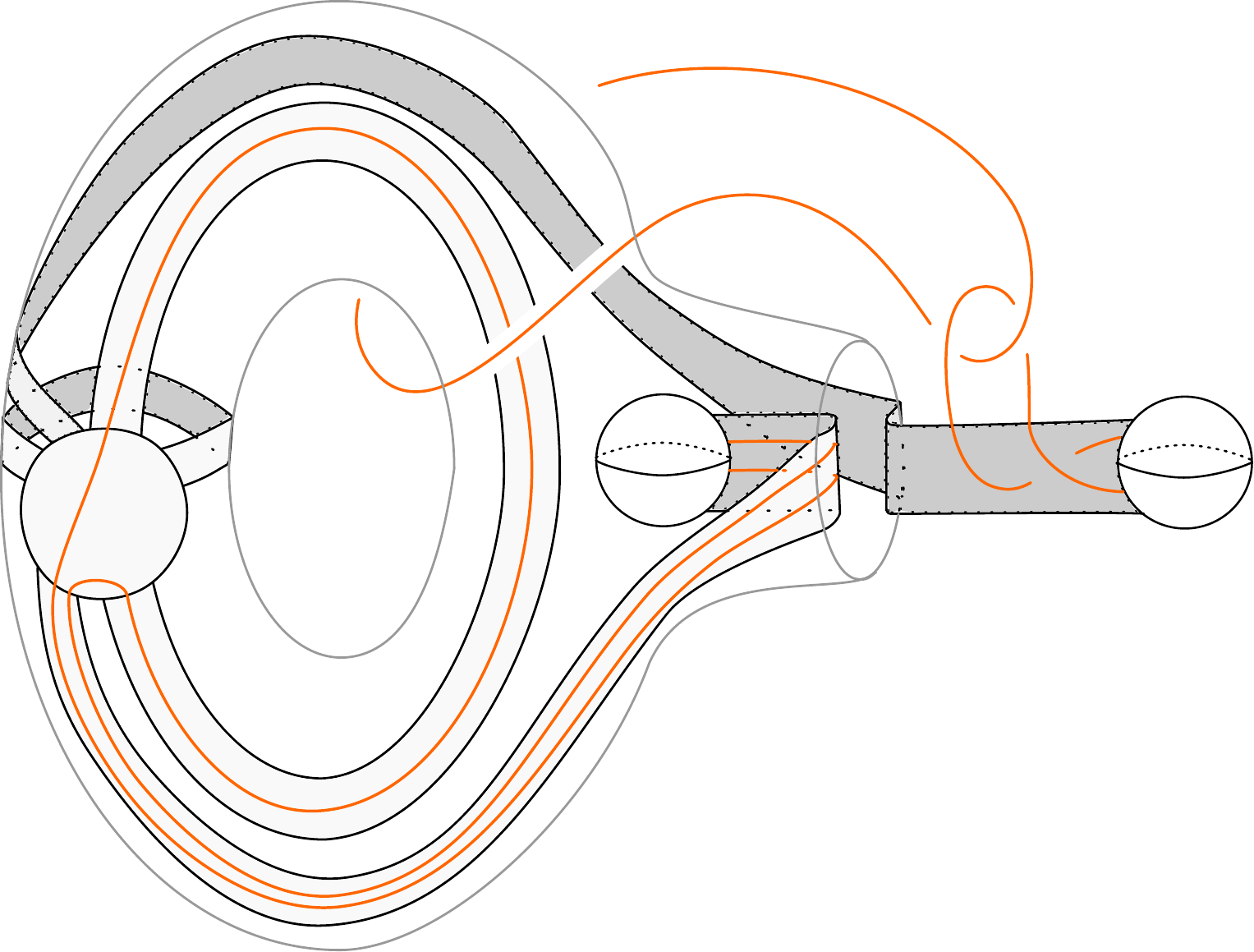
\caption{Fibration and open book for $\partial X\cong T^3$.}
\label{fig::fibration}
\end{figure}

With $X$ and $Y$ described by a Kirby diagram, and the open book described via an embedded page $P$, the first step is to see $X$ as a relative cobordism between $3$--manifolds with boundary and produce a new handle decomposition of $X$ adapted to this cobordism structure.

\begin{definition} Let $Y_-$ and $Y_+$ be 3-dimensional oriented manifolds with boundary, and $X$ a 4--dimensional oriented manifold with boundary. The triple $(X ; Y_-, Y_+)$ is a relative cobordism between $Y_-$ and $Y_+$ if
\begin{enumerate}[label=(\roman*),itemsep=0pt]
\item The oriented boundary of $X$ can be decomposed as $-Y_-\cup N\cup Y_+$ .
\item $-Y_-\cap Y_+=\emptyset$.
\item $\partial N=-\partial Y_-\amalg \partial Y_+$.
\item $N \cong [-1,1] \times \partial Y_-$ (and thus $\partial Y_- \cong \partial Y_+$).
\item $N\cap Y_\pm=\partial Y_\pm$.
\end{enumerate}
The manifold $X$ is then called a relative cobordism from $Y_-$ to $Y_+$. In addition, the manifold $N$ is sometimes referred to as the horizontal boundary component of $X$, and the manifolds $Y_-$ and $Y_+$ as the vertical boundary components. (We view our cobordisms as running from left to right, not from bottom to top.)
\end{definition}
% Notice that a connected $4$-dimensional relative cobordism $X$ from $Y_-$ to $Y_+$ can be given a relative handle decomposition involving only $1$--, $2$--, and $3$--handles, attached to $\{1\} \times Y_- \subset [-1,1] \times Y_-$

When $\partial X = Y$ has an open book decomposition, this gives us a relative cobordism structure in the following sense. Consider the open book $(\Lambda, f)$ on $Y$ and restrict the fibration $f$ to the complement $Y \setminus \nu(\Lambda)$ of an open tubular neighborhood of the link $\Lambda$.
An identification of $S^1$ with a square $\partial ([-1,1] \times[-1,1])$, and a decomposition of the square into its vertical and horizontal components $v_\pm = \{\pm 1\} \times [-1,1], h_\pm = [-1,1] \times \{\pm 1\}$ induces the following decomposition on $Y$: $$Y=f^{-1}(v_-\amalg v_+) \cup \left(\nu(\Lambda)\cup f^{-1}(h_-\amalg h_+)\right).$$

Then, if we set $Y_\pm=f^{-1}(v_\pm)$ we call $Y_-$ and $Y_+$ the vertical components of $Y$, and $N=\nu(\Lambda)\cup f^{-1}(g_-\amalg g_+)$ the horizontal component of $Y$. Moreover, notice that if $P$ is a page of the fibration $f$, then $Y_\pm \cong [-1,1] \times P$, and $N\cong \nu(\partial P)\cup \left([-1,1]\times P\amalg [-1,1]\times P\right) \cong [-1,1] \times D(P)$, where $D(P)$ denotes the double of $P$.

With respect to this decomposition of $Y=\partial X$, $X$ is now a relative cobordism from $Y_- \cong [-1,1] \times P$ to $Y_+ \cong [-1,1] \times P$. Our first goal is to turn the given handle decomposition of $X$, which presents $X$ as a cobordism from $\emptyset$ to $Y$, into a handle decomposition presenting $X$ as a relative cobordism from $Y_-$ to $Y_+$. We will call a handle decomposition of the first type (starting with a $0$--handle to which $1$--, $2$-- and $3$--handles are added) a ``standard handle decomposition'', and a handle decomposition of the second type (starting with $[-1,1] \times Y_- \cong [-1,1] \times [-1,1] \times P$ to which $1$--, $2$-- and $3$--handles are added along $\{1\} \times [-1,1] \times P$) a ``relative handle decomposition''. Intermediate between an arbitrary standard handle decomposition and a relative decomposition, we introduce:
% 
% In terms of a handle decomposition: $$X=\left(I\times V_-\right)\cup\left(1\text{--handles}\right)\cup\left(2\text{--handles}\right)\cup\left(3\text{--handles}\right),$$ with the handles attached always along the `positive' vertical component.

\begin{definition}\label{compatibility} A standard handle decomposition of a $4$--manifold $X$, involving a single $0$--handle, and some number of $1$--, $2$-- and $3$--handles, is {\em compatible with} a given open book $(\Lambda,f)$ on its boundary $\partial X$ if, for some page $P \subset \partial X$ of the open book, the $0$--handle and some of the $1$--handles forms a neighborhood $[-1,1] \times [-1,1] \times P$ of $P$ in $X$, with $\{-1\} \times [-1,1] \times P$ being a neighborhood of $P$ in $\partial X$, and the remaining $1$--, $2$-- and $3$--handles attached along $\{1\} \times [-1,1] \times P$.
\end{definition}
If a standard handle decomposition of $X$ is compatible with a given open book with page $P$, then  $P$ can be given a handle decomposition involving a single $2$--dimensional $0$--handle and several $2$--dimensional $1$--handles with the following behavior:
\begin{itemize}%[label={\tiny\textbullet}]
\item The $2$--dimensional $0$--handle is completely contained in the $4$--dimensional $0$--handle, and it is disjoint from every attaching sphere of the $4$--dimensional handle decomposition.
\item For each $2$--dimensional $1$--handle $H^2_1$ there is a unique $4$--dimensional $1$--handle $H^4_1$ such that the core of $H^2_1$ intersects the co-core of $H^4_1$ transversely exactly once, i.e. $H^2_1$ goes over $H^4_1$ geometrically once and no other $2$--dimensional $1$--handles go over $H^4_1$.
%\item The number of 3--handles $h_3$ equals $a_1=k_1-1+\chi(P)$, the number of purely 4--dimensional or additional 1--handles.
\end{itemize}

With a given standard handle decomposition of $X$ and a page $P$ visible in $Y = \partial X$, we will most likely need to add cancelling $1$--$2$ pairs in order to achieve this compatibility. In our example, having drawn a picture in which the $2$--dimensional $1$--handles of $P$ are obvious, we see a natural place to put the $1$--$2$ pairs: One of the $2$--dimensional $1$--handles already went over a $4$--dimensional $1$--handle, and now we add two cancelling pairs to accommodate the other two $2$--dimensional $1$--handles. A ``planar diagram'' for $P$ ca then be obtained via an ambient isotopy. In some sense, this is the hardest part of the process to implement in practice. The result of the isotopy is shown in \autoref{fig::adapted}, where the $0$--handle for the page is drawn as a hexagon, and the handles as identifications of some of the sides. %Notice that the second figure is obtained from the first after performing a 4--dimensional handle slide.

\begin{figure}[h]
\centering
%\fontsize{9pt}{9pt}
%\def\svgwidth{0.2\textwidth}
%\input{Torus_Complement/adapted-A.pdf_tex}
\def\svgwidth{0.3\textwidth}
\input{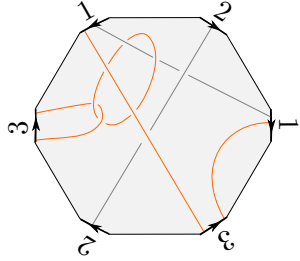}
\caption{Kirby diagram for a handle decomposition of $X$ compatible with the given open book on $\partial X$. The grey $2$--handles were added to cancel the $1$--handles and these, in turn, were added to accommodate the 2--dimensional handles of the page. \label{fig::adapted}}
\end{figure}

% \begin{lemma}\label{trisection_alg} Consider a $(1,k_1,k_2,k_3,0)$ handle decomposition of a $4$--manifold $X^4$. If $(1,k_1,k_2,k_3,0)$ is compatible with a $(p,b)$ open book decomposition on $\partial X$, then there exists a $(g,k;p,b)$ relative trisection of $X^4$, with $g=p+k_3+k_2$, and $k=k_1$.
% \end{lemma}
% 
% \begin{proof}

Returning to the general setting, we now assume that the handle decomposition of $X$ is compatible with the given open book. Let $X_1$ be the union of the $0$--handle and the $1$--handles. Since the handle decomposition of $X$ is compatible with the open book on $\partial X$, then 
%(1) $k_3$, the number of $3$--handles is equal to $a_1=k_1- 1+\chi(P)$, the number of additional or purely 4--dimensional 1--handles, and (2) 
$X_1$ can be realized as the union of $[-1,1]^2\times P$ with some number $a_1$ of ``purely $4$--dimensional'' $1$--handles. Thus, $X_1$ is isomorphic to $\natural^{k_1} S^1\times B^3$ and $\partial X_1$ is naturally decomposed into 
$$ \partial_{\operatorname{bdry}} X_1=\partial X\cap X_1=  Y_-\cong [-1,1]\times P,$$ 
and 
\begin{align*}
\partial_{\operatorname{int}} X_1&=\partial X_1\cap\;{\operatorname{int}}(X)\\&=(\{1\}\times Y_-) \setminus (\amalg^{a_1}(S^0\times D^3)) \cup (\amalg^{a_1}(D^1\times S^2)).
\end{align*}
That is, $\partial_{\operatorname{int}} X_1$ is the result of performing surgery on $[-1,1]\times P$ along the attaching spheres of the purely $4$--dimensional handles. To understand the effect of these handles, align the attaching regions of the $1$--handles so that the equator of each $S^0\times \partial D^3$ lies in $\{0\}\times P$. Then, removing a vertical half of $S^0\times D^3$ from each $[-1,0]\times P$ and $[0,1]\times P$, results in a space that is diffeomorphic to $I\times P$ and that has marked disks in its boundary. Additionally, the products $D^1\times S^2$ are attached along the northern hemisphere to $[0,1]\times P\setminus a_1(S^0\times D^3)$ and along the southern hemisphere to $[-1,0]\times P\setminus a_1(S^0\times D^3)$. Thus, $\partial_\text{int}X_1$ has a sutured Heegaard splitting $H_{12}\underset{F}{\cup}H_{13}$ where $F$ is the surface (with boundary) that results from performing surgery on $P$ along the $a_1$ embedded $0$--spheres, and each $H_{ij}$ is the result of attaching $3$--dimensional $1$--handles to $I\times P$.
% Now, using the fact that $k=k_1$ and comparing the formula for the Euler characteristic of $X$ coming from the handle decomposition $\chi=1-k_1+k_2-a_1$, with that coming from the relative trisection $\chi=g-3k+3p+2b-1$, we get the equality $$g=p+a_1+k_2.$$ This means that in order to get the correct trisection genus, we have to increase the genus of the sutured Heegaard splitting of $\partial_\text{int} X_1$ from $p+a_1$ to $p+a_1+k_2$. This will be accomplished by performing certain  $3$--dimensional stabilizations of the Heegaard splitting $ H_{12}\underset{F}{ \cup} H_{31}$ that will be determined by the attaching link of the $2$--handles. In fact, we will produce more than just the correct genus, we will also produce systems of compression disks for the 3--dimensional handlebodies that will appear in the trisection. {\color{blue} Cite Gay-Kirby Lemma 14.}
% 

Now let $L$ be the framed attaching link for the $2$--handles; $L$ is in fact embedded in $\partial_\text{int} X_1$, and can be projected onto the surface $\{0\}\times F$, as already illustrated in \autoref{fig::adapted}. If necessary (it is not in our example), use Reidemeister I moves to ensure that the handle framing agrees with the surface framing. Since at each crossing in the projection there is a way of knowing which strand is to be thought of as the one passing under, the two points in $L$ that project into each one of the crossing points can be labeled as `over point' and `under point'. Then, if a diagram of the projection has $c$ crossing points, the link $L$ has a decomposition into $c$ vertices and $c$ edges by placing a vertex at each `under point' of $L$. Call an edge an `over edge' if it contains at least one `over point'. If necessary (again it is not in our example), use Reidemeister II moves to ensure that each component has at least one over edge.
% Let $\theta$ be the number of over edges in the decomposition of $L$. Then, given that each component of $L$ has at least one over edge, we have $\theta\geq k_2$. The way the stabilizations are performed will change depending on wether $\theta=k_2$ or $\theta>k_2$.\\
% 
%If $\theta= k_2$ then we 

The next step involves the stabilizations of the sutured Heegaard splitting. These stabilizations can be performed at each crossing in the decomposition of $L$ induced by the projection, but it is worth noting that we can often be more efficient if the diagram for $L$ is not alternating: if one strand goes over several other understrands in succession, we can resolve this with one stabilization . Specifically, if $e_{ij}$ is the $j$-th over edge in the component $L_i$ of $L$, then form the handlebody $H'_{31}$ by removing from $H_{31}$ the $3$--dimensional tubular neighborhoods of arcs in $H_{31}$ parallel rel. boundary to interior segments of the edges $e_{ij}$. In addition, form the space $H'_{12}$ by adding these tubular neighborhoods to $H_{12}$, and set $F'=H'_{12}\cap H'_{31}$. Next, consider the disk $D_{ij}$ obtained as the isotopy rel. boundary that gives the arc parallel to $e_{ij}$. Finally, resolve the crossings in $L$ by sliding each over strand $e_{ij}$ over $D_{ij}$ and into $F'$. This gives a higher genus sutured Heegaard splitting of $\partial_\text{int} X_1$. Pushing $L$ into the interior of $H'_{12}$, we are ready to produce the trisection (and simultaneously recap the definition of a relative trisection) in much the same way as explained in~\cite[Lemma 14]{gk}: \\
Let $N=[-\epsilon,\epsilon] \times H'_{12}$ be a small tubular neighborhood of $H'_{12}$ with $[-\epsilon,0] \times H_{12} = N \cap X_1$, and set
\begin{itemize}[itemsep=0pt]
 \item $X_1$ as described,
 \item $X_2$  the union of $[0,\epsilon] \times H'_{12}$ and the $2$--handles, and 
 \item $X_3=X\setminus\text{int}\left(X_1\cup X_2\right)$
\end{itemize} 
To see that $X=X_1 \cup X_2 \cup X_3$ is a relative trisection, notice that:
\begin{itemize}[itemsep=0pt]
 \item $F' = X_1 \cap X_2 \cap X_3$ is a compact surface with boundary,
 \item both $H'_{31} = X_3 \cap X_1$ and $H'_{12} = X_1 \cap X_2$ are relative compression bodies compressing $F'$ down to the page $P$,
 \item $X_1$ is diffeomorphic to $\natural^{k_1} (S^1 \times B^3)$ for some $k_1$,
 \item $X_1\cap\partial X\cong I\times P$.
\end{itemize}
It remains to verify that both $X_2$ and $X_3$ are diffeomorphic to $\natural^{k_i} (S^1 \times B^3)$ for some $k_2$ and $k_3$, with both $X_2\cap\partial X$ and $X_3\cap\partial X$ diffeomorphic to $I\times P$, and that $H'_{23} = X_2 \cap X_3$ is a relative compression body from $F'$ to $P$. This will be an unbalanced trisection if $k_1$, $k_2$ and $k_3$ are distinct, but standard trisection stabilizations can balance the trisection to get $k=k_1=k_2=k_3$.

To see that $X_2$ is isomorphic to $\natural^{k_2} (S^1 \times B^3)$, notice that the space $[0,\epsilon] \times H'_{12} \cong \natural^{l} (S^1 \times B^3)$ (for some $l$) inherits a handle structure from that of $H'_{12}$. Moreover, the belt sphere of the $4$--dimensional $1$--handle is the union of $[0,\epsilon]\times \beta_j$ and $\{0,\epsilon\}\times D_j$ where $D_j$ is a compression disk for $H'_{12}$ and $\beta_j$ is its boundary. Notice also that the way the stabilizations were performed guarantees that every component $L_i$ of $L$ intersects the belt spheres $\beta_{ij}$ transversely. So, fix one index $J_i$ for each $i$ to be the belt sphere that $L_i$ intersects transversely, and slide every other $\beta_{ij}$ over $\beta_{i J_i}$ so that it no longer intersects $L_i$. This shows that the $1$--handle with core parallel to the over edge $e_{iJ_i}$ and the $2$--handle with attaching circle $L_i$ are a cancelling pair, and so that $X_2\cong \natural^{k_2} (S^1 \times B^3)$ (for $k_2$ equal to $l$ minus the number of components of $L$). This process will also help us produce enough $\gamma$ curves to get a trisection diagram.

Turning the relative handle decomposition ``upside-down'' shows us that $X_3=X\setminus{\operatorname{int}}\left(X_1\cup X_2\right)$ is diffeomorphic to $\natural^{k_3} (S^1 \times B^3)$, $X_3\cap\partial X$ is diffeomorphic to $[-1,1]\times P$, and the interior boundaries $\partial_{\operatorname{int}}X_1$ and $\partial_{\operatorname{int}}X_3$ are also diffeomorphic. Since the attaching link $L$ can be isotoped to lie in the interior of $H'_{12}$, the latter implies that $H'_{23}=\partial_{\operatorname{int}} X_2\setminus H'_{12}$, the result of surgery on $H'_{12}$ along the attaching circles of the $2$--handles, is diffeomorphic to $H'_{12}$ and thus to the desired compression body.

We return to the example in \autoref{fig::adapted}. Notice that the attaching circles of the $2$--handles have already been projected into $P$, with no Reidemeister I or II moves needed. Also, there are no ``purely $4$--dimensional'' $1$--handles, and no $3$--handles, i.e. the relative handle decomposition has only $2$--handles. We need to stabilize the page five times to resolve the crossings in the $2$--handle link $L$. (There areeight crossings, but some of them can be resolved in groups with a single stabilization since the diagram is not alternating.) Thus we get a trisection immediately in which:
\begin{itemize}
 \item $X_1$ is $[-1,1] \times [-1,1] \times P$, where $P$ is the page, a genus $1$ surface with two boundary components. Thus $X_1 \cong \natural^3 S^1 \times B^3$
 \item $\partial_{\operatorname{int}} X_1 \cong [-1,1] \times P$ is split along a genus $6$ surface $\Sigma$ with two boundary components, obtained by stabilizing the splitting along $0 \times P$ five times. This splits $\partial_{\operatorname{int}} X_1$ into two compression bodies $H_{31}$ and $H_{12}$, each of which compresses $\Sigma$ down to $P$ by compressing along $5$ simple closed curves. Thus, ignoring the sutures, each of $H_{12}$ and $H_{31}$ is a genus $8$ handlebody.
 \item $X_2$ is a thickening of $H_{12}$ together with the four $2$--handles, each of which cancels one of the eight $1$--handles in $I \times H_{12} \cong \natural^8 S^1 \times  B^3$, so that $X_2 \cong \natural^4 S^1 \times B^3$.
 \item Thus $H_{23}$ is also a compression body from $\Sigma$ to $P$, and since there are no $3$--handles, $X_3$ is again $[-1,1] \times [-1,1] \times P \cong \natural^3 S^1 \times B^3$.
\end{itemize}
This is an unbalanced trisection, and we can draw its diagram by understanding which curves on the genus $6$ surface $\Sigma$ bound disks in the three compression bodies. In the two compression bodies forming the boundary of $X_1$, namely $H_{31}$ and $H_{12}$, these are the five standard $\alpha$ (red, bounding in $H_{31}$) and $\beta$ (blue, bounding in $H_{12}$) curves corresponding to the five stabilizations, indicated in \autoref{fig::trisection}. The four components of the attaching link $L$ for the $2$--handles give four of the five curves which bound disks in $H_{23}$ (green $\gamma$ curves). To find the fifth $\gamma$ curve, we note that the four curves coming from $L$ cancel four of the $\beta$ curves, so we can carry out this cancellation and the missing $\gamma$ curve can be obtained as a parallel copy to the curve obtained as $\beta_C+\beta_D$. The result is shown in \autoref{fig::trisection}. Finally we can stabilize to get a balanced trisection if desired, also shown in \autoref{fig::trisection}.
\end{proof}
\begin{figure}[h]
\centering
\fontsize{8pt}{8pt}
\def\svgwidth{0.5\textwidth}
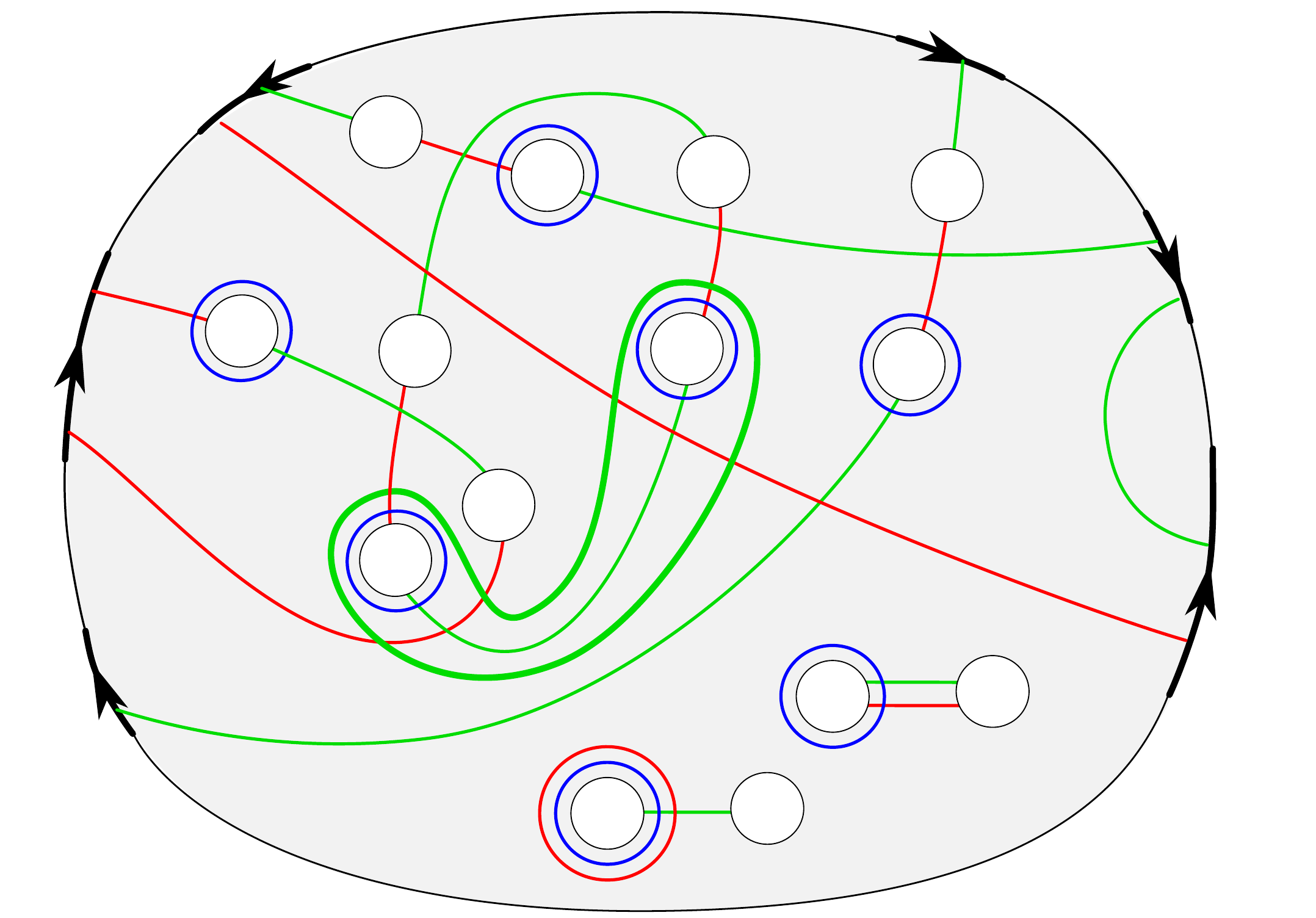
\caption{ A $(8,5;1,2)$ relative trisection diagram for the exterior of the standard torus in $S^4$. The thick green curve is the $\gamma$ curve that is not an attaching circle of the $2$--handles.}
\label{fig::trisection}
\end{figure}
%As a final comment, note that there are essentially two difficult parts to our technique for turning the Kirby diagram with embedded page into a trisection diagram. The first is getting the handle decomposition adapted to the open book, which involves isotoping a surface around in a surgery description of a $3$--manifold. The second is finding the ``extra $\gamma$ curves'', those which are not just components of the attaching link $L$.
\end{section}

%%%%%%%%%%%%%%%%%%%%%%%%%%%%%%%%%
%%			Trisecting Cobordisms				%%
%%%%%%%%%%%%%%%%%%%%%%%%%%%%%%%%%
\begin{section}{Trisections of $4$--manifolds with Multiple Boundary Components\label{sec:basicdefs}}

We extend the definition of relative trisections to manifolds with $m>1$ boundary components by generalizing the construction in \cite{cgp}. For this we require two integers $g,k \geq 0$ and $m$ pairs of integers $(p_1, b_1), \ldots, (p_m, b_m)$ all of which satisfy $\sum(2p_i + b_i -1)\leq k \leq g - \sum(p_i + b_i -1)$. Let us fix the following notation: $p = \sum p_i$, $b=\sum b_i$, $l_i = 2p_i + b_i -1$, and $l = \sum l_i$. Define $U_i= P_i \times D$, where $D = \{re^{i\theta}| 0 \leq r \leq 1; -\pi/3 \leq \theta \leq \pi/3\}$ is a third of the unit disk, whose boundary we decompose into three pieces:
\begin{align*}
\partial ^0 D &=\{e^{i\theta}| - \pi/3 \leq \theta \leq \pi/\theta\}\cr
\partial^\pm D &= \{re^{\pm i\pi/3}|0\leq r\leq 1\}.
\end{align*}
Each $\partial U_i$ has the corresponding decomposition
\begin{align*}
\partial^0U_i	&= (P_i \times \partial D) \cup (\partial P_i \times D)\cr
\partial^\pm U_i	&= P_i \times \partial^\pm D.
\end{align*}
Note that $U_i\cong\natural^{l_i}S^1\times B^3$. We now connect each of the $U_i$'s to obtain
\begin{align*}
U &= U_1 \natural \cdots \natural U_m = \natural^l S^1\times B^3.
\end{align*}
These boundary connected sums are again taken to preserve the splittings, giving us
%\begin{align*}
$$\partial^0U = \coprod_{i=1}^m\partial^0U_i\;\text{ and }\;
\partial^\pm U = \partial^\pm U_1 \natural \cdots \natural \partial^\pm U_m.$$
%\end{align*}
We now proceed as before, attaching $U$ to $V_n:=\natural^nS^1\times B^3$, whose boundary has the unique genus $n+s$ Heegaard splitting $\partial_sV_n = \partial^+_sV_n \cup \partial^-_sV_n.$ Choosing $n=l-k$ and $s=g-n-p$ gives us
\begin{equation*}
Z_k	:=U\natural V_n \cong \natural^k S^1\times B^3
\end{equation*}
whose boundary $\partial Z_k \cong \#^k S^1\times S^2$ inherits the decomposition
\begin{align*}
\partial Z_k &:= Y_{g,k} = Y_{g,k}^+ \cup Y_m^0 \cup Y_{g,k}^-,
\end{align*}
where the subscript of $Y^0_m=\partial^0U$ indicates that the trisection is of a manifold with $m>1$ boundary components.  Although the notation is the same, we have

$$Y_{g,k}^\pm	= (\partial^\pm U_1 \natural \cdots \natural \partial^\pm U_m) \natural \partial^\pm_sV_n.$$

\begin{definition}
A \emph{$(g,k;p_1,b_1, \ldots, p_m, b_m)$ relative trisection} of a compact, connected, smooth, oriented $4$--manifold $X$ with $m>0$ boundary components is a decomposition of $X$ into three codimension 0 submanifolds $X=X_1 \cup X_2 \cup X_3$ such that for each $i=1,2,3$,
\begin{enumerate}[itemsep=0pt]
\item there exists a diffeomorphism $\varphi_i: X_i \rightarrow Z_k$
\item $\varphi_i(X_i \cap X_{\pm 1}) = Y^\pm_{g,k}$
\item $\varphi_i(X_i \cap \partial X) = Y^0_m,$
\end{enumerate}
where indices are taken mod $3$. 
\end{definition}

% Before we show the existence of relative trisections for any compact $X^4$, we highlight a beautiful relationship a relatively trisected $4$--manifold has with it's boundary: relative trisections induce open book decompositions on each boundary component of $X$. Given a $b$ component link $B \subset Y^3$, an \emph{open book decomposition} of $Y$ is a the pair $(\phi, B)$, where $\phi:Y\setminus N(B) \rightarrow S^1$ is a fiber bundle with fibers $\phi^{-1}(t)\cong P$, known as the \emph{pages},
% 

\begin{lemma}\label{lem:obd}

A $(g,k;p_1,b_1;\ldots;p_m,b_m)$ relative trisection of a $4$--manifold $X$ with $m>0$ boundary components induces an open book decomposition on each component $\partial_i X$ of $\partial X$ whose page is $P_i$, a genus $p_i$ surface with $b_i$ boundary components. 

\end{lemma}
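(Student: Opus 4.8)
The plan is to build the open book on each component $\partial_i X$ directly out of the three pieces $X_1 \cap \partial_i X$, $X_2 \cap \partial_i X$, $X_3 \cap \partial_i X$, carrying out for a single boundary component the argument used for the connected case in \cite{gk} and \cite{cgp}.

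First I would restrict the trisection to a fixed boundary component. By definition each $\varphi_j$ maps $X_j \cap \partial X$ onto $Y^0_m = \partial^0 U = \coprod_{i=1}^m \partial^0 U_i$, and since $\partial X = \coprod_i \partial_i X$ with the $\partial^0 U_i$ pairwise disjoint, after relabelling we may assume $\varphi_j(X_j \cap \partial_i X) = \partial^0 U_i$ for every $j$. Using $\partial^0 U_i = (P_i \times \partial^0 D) \cup (\partial P_i \times D)$, this displays $\partial_i X$ as a union of three copies $A^1, A^2, A^3$ of this model, glued along the images of the pairwise intersections $X_j \cap X_{j+1}$.

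Next I would locate the gluing regions and recognize the result as an open book. By definition $X_j \cap X_{j+1}$ maps to $Y^+_{g,k}$ under $\varphi_j$ and to $Y^-_{g,k}$ under $\varphi_{j+1}$; intersecting with $\partial_i X$ and using $Y^\pm_{g,k} = (\partial^\pm U_1 \natural \cdots \natural \partial^\pm U_m)\natural \partial^\pm_s V_n$ together with $\partial^\pm U_i = P_i \times \partial^\pm D$, one computes that $A^j$ meets $A^{j+1}$ in a copy of $P_i$ --- essentially $P_i \times (\partial^0 D \cap \partial^\pm D)$ with a collar of $\partial P_i$ adjoined from the solid-torus part $\partial P_i \times D$. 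Hence $A^j$ and $A^{j+1}$ are glued by a diffeomorphism $\psi_j$ of $P_i$ preserving $\partial P_i$. The three wedges $D$ fit together along their $\partial^\pm D$ edges into a single disk $D^2$, whose center is the common tip $0$ of the wedges, and the three arcs $\partial^0 D$ fit together into a circle $S^1$; correspondingly $\partial_i X$ becomes the union of the mapping torus of $\phi_i := \psi_3 \circ \psi_2 \circ \psi_1 : P_i \to P_i$ (composed in the appropriate cyclic order) with the $b_i$ solid tori $\partial P_i \times D^2$. The mapping-torus coordinate, together with the angular coordinate on $\partial P_i \times (D^2 \setminus \{0\})$, then defines a fibration $\partial_i X \setminus \partial P_i \to S^1$ whose fiber is $P_i$, a genus $p_i$ surface with $b_i$ boundary components, and whose binding is $\partial P_i$: the desired open book.

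The step I expect to be the main obstacle is the verification that the trisection diffeomorphisms $\varphi_j$ really do restrict on the pieces $\partial^0 U_i$ to gluings of $P_i$ respecting $\partial P_i$ and the product collar, so that the threefold union is an honest mapping torus together with a neighborhood of the binding --- a genuine fibration --- rather than only a decomposition. Since the pieces $\partial^0 U_i$ are pairwise disjoint and each one, together with the $V_n$-summand of $Z_k$, reproduces exactly the local picture of a connected relative trisection, this compatibility is precisely the one already established for $m=1$ in \cite{gk} and \cite{cgp}, now applied to each boundary component in turn.
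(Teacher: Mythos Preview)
Your proposal is correct and follows essentially the same approach as the paper: both identify each piece $X_j\cap\partial X$ with $(P\times I)\cup(\partial P\times D)$ and then recognize the cyclic union as a mapping torus $(P\times_\mu S^1)\cup(\partial P\times D^2)$. The only organizational difference is that the paper works with $P=\sqcup_i P_i$ globally and restricts to each component at the end, whereas you fix a boundary component from the outset; your discussion of the gluing maps $\psi_j$ and the compatibility concern in the last paragraph is in fact more explicit than what the paper records.
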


\begin{proof}

Suppose $X=X_1 \cup X_2 \cup X_3$ is a $(g,k;p_1, b_1;\ldots; p_m, b_m)$ relative trisection. Recall that $X_i \cap \partial X = Y^0_m \cong (P \times I)\cup(\partial P \times D)$, where $P = \underset{i=1}{\overset{m}{\sqcup}} P_i$ and each $P_i$ is a genus $p_i$ surface with $b_i$ boundary components. We also denote $D_j = \{re^{i\theta}|\; |r|\leq 1, \; \frac{2\pi(j-1)}{3} \leq \theta \leq \frac{2\pi j}{3}\}$, for $j=1,2,3$, to be a third of the unit disk associated to each $X_j$. Thus,
\begin{align*}
\partial X = & (X_1\cap \partial X)\cup(X_2 \cap \partial X)	\cup(X_3\cap \partial X)\cr
= & \left((P \times I_1)\cup(P\times I_2)\cup(P\times I_3)\right)\cr
& \bigcup \left( (\partial P \times D_1)\cup (\partial P \times D_2)\cup (\partial P \times D_3)\right)\cr
= & (P \underset{\mu}{\times} S^1) \cup (\partial P \times D^2)
\end{align*}
where $P \underset{\mu}{\times} S^1$ is the $m$--component mapping cylinder of the monodromy map $\mu$, and $\partial P \times D^2$ is regular neighborhood of the binding  $\partial P$. Restricting $\mu$ to each $P_i$ gives an open book decomposition, $(P_i \underset{\mu}{\times}I) \cup (\partial P_i \times D^2)$, on each component of $\partial X$ as desired.
\end{proof}

\begin{theorem}\label{thm:existence}

Given an open book decomposition on each component of $\partial X$, there exists a relative trisection of $X$ inducing the given open book(s).

\end{theorem}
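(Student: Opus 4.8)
The plan is to reduce to the connected-boundary case already treated above by rerunning the same construction, now with a disconnected page. Write $\partial X = \partial_1 X \sqcup \cdots \sqcup \partial_m X$, let $(\Lambda_i,f_i)$ be the given open book on $\partial_i X$ with page $P_i$ of genus $p_i$ and $b_i$ boundary components, set $P = P_1 \sqcup \cdots \sqcup P_m$, and let $(\Lambda,f)=\bigsqcup_i(\Lambda_i,f_i)$ be the resulting open book on the (disconnected) $3$--manifold $\partial X$. Exactly as in the connected case this exhibits $X$ as a relative cobordism from $Y_-=\bigsqcup_i([-1,1]\times P_i)$ to $Y_+=\bigsqcup_i([-1,1]\times P_i)$; the definition of relative cobordism does not require the vertical boundary to be connected, so this step is unaffected by $m>1$.

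First I would fix a handle decomposition of $X$ with a single $0$--handle and handles of index $1$, $2$ and $3$ only (possible since $X$ is connected; a disconnected $\partial X$ merely forces some $3$--handles, which the construction of the preceding section already allows). Next I would make this decomposition \emph{compatible} with $(\Lambda,f)$ in the sense of a direct generalization of \autoref{compatibility}: for suitable pages $P_i\subset\partial_i X$, the $0$--handle together with some of the $1$--handles should form a neighborhood $[-1,1]\times[-1,1]\times P$ of $P$ in $X$, with the remaining handles attached on the other side. The one genuinely new feature is that, since $P$ is disconnected, this neighborhood has $m$ components and must be joined up inside the connected handlebody $X_1$ by at least $m-1$ additional ``purely $4$--dimensional'' $1$--handles connecting its components; as with every auxiliary $1$--handle in the connected-boundary construction, each of these is introduced as half of a cancelling $1$--$2$ pair so that $X$ is unchanged, and each then contributes one crossing-free over-strand to the projected $2$--handle link. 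As the authors remark in the preceding section, arranging such compatibility --- isotoping a collar of each $P_i$ into the right part of a chosen handle decomposition --- is the step requiring the most care in practice, and I expect it to be the main obstacle here as well; the remaining steps are formal adaptations.

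Given a compatible decomposition, I would repeat the construction of the preceding section essentially verbatim. The union $X_1$ of the $0$--handle and $1$--handles is $\natural^{k_1}S^1\times B^3$, and $\partial_{\operatorname{int}}X_1$ inherits a sutured Heegaard splitting $H_{12}\cup_F H_{31}$ whose central surface $F$ is obtained from $P$ by surgery on the $a_1\ge m-1$ attaching $0$--spheres of the purely $4$--dimensional $1$--handles; since $m-1$ of those surgeries join the components of $P$, the surface $F$ is connected. One then projects the framed $2$--handle link $L$ into $\{0\}\times F$, performs the Reidemeister~I and~II normalizations, and stabilizes the splitting at the over-edges of $L$ exactly as before, obtaining a higher-genus splitting $H'_{12}\cup_{F'}H'_{31}$ and a decomposition $X=X_1\cup X_2\cup X_3$ with $X_2=([0,\epsilon]\times H'_{12})\cup(\text{$2$--handles})$ and $X_3=X\setminus\operatorname{int}(X_1\cup X_2)$. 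The handle-cancellation argument identifying $X_2\cong\natural^{k_2}S^1\times B^3$, the upside-down argument identifying $X_3\cong\natural^{k_3}S^1\times B^3$ and $H'_{23}$ with the compression body $H'_{12}$, and the identifications $X_j\cap\partial X\cong I\times P$ together with ``$H'$ is a relative compression body from $F'$ down to $P$'', all go through unchanged with $P$ disconnected.

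It remains to recognize this decomposition as a relative trisection in the sense of \autoref{sec:basicdefs}: I would check that each $X_j$, with its induced boundary decomposition, is diffeomorphic to the model $Z_k$ with its splitting into $Y^\pm_{g,k}$ and $Y^0_m$. Here $X_j\cap\partial X$ is a regular neighborhood of $P=\bigsqcup_i P_i$ in $\partial X$, hence diffeomorphic to $\bigsqcup_i\big((P_i\times I)\cup(\partial P_i\times D)\big)=Y^0_m$; $X_j\cap X_{j\pm1}=H'$ is a compression body from the connected genus--$g$ surface $F'$ down to $\bigsqcup_i P_i$, which is precisely the model $Y^\pm_{g,k}$ once the parameters $g,k$ and $(p_i,b_i)$ are read off from $F'$ and the $P_i$; and $X_j\cong\natural^{k}S^1\times B^3=Z_k$. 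Since the three numbers $k_j$ produced by the construction need not agree, I would finish by applying the standard interior trisection stabilization to balance them (and relative stabilizations if a larger prescribed $(g,k)$ is wanted), exactly as in the connected-boundary theorem. Because the induced open book on $\partial X$ is $(\Lambda,f)=\bigsqcup_i(\Lambda_i,f_i)$ by construction, \autoref{lem:obd} then identifies the induced open book on each $\partial_i X$ with the prescribed $(\Lambda_i,f_i)$, completing the proof.
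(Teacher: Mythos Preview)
Your proposal is correct and follows precisely the route the paper indicates: the paper does not give a detailed proof of \autoref{thm:existence} but simply states that ``the proof in this case is essentially identical to the connected boundary component case proved earlier'' and defers to the worked example in \autoref{sec:products}. Your write-up carries out exactly that reduction, and in fact supplies the one genuinely new observation the paper leaves implicit---that with a disconnected page $P=\bigsqcup_i P_i$ one needs at least $m-1$ purely $4$--dimensional $1$--handles in $X_1$ to connect the components of $[-1,1]^2\times P$, so that the resulting central surface $F$ is connected---so your argument is, if anything, more complete than what appears in the paper.
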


The proof in this case is essentially identical to the connected boundary component case proved earlier; rather than replicate that proof, we see how it works in this case in \autoref{sec:products} by turning an interesting handle decomposition of $M \times I$ into a trisection diagram.

\end{section}

\begin{section}{Relative Trisection Diagrams}

Trisection diagrams allow us to describe any $4$--manifold in terms of three collections of curves $\alpha, \beta, \gamma$ on a surface $\Sigma$. The most notable characteristic of relative trisection diagrams for manifolds with multiple boundary components is that each collection $\alpha, \beta, \gamma$ contains (at least one) separating curve. This is a requirement since performing surgery on $\Sigma$ along any one of $\alpha, \beta, \gamma$ results in $m>0$ surfaces with boundary, each one corresponding to a page of an open book induced on a component of $\partial X$.

\begin{definition}

A \emph{$(g,k;p_1, b_1; \ldots; p_m, b_m)$ relative trisection diagram} is a $4$--tuple $(\Sigma, \alpha, \beta, \gamma)$ such that

\begin{enumerate}[itemsep=0pt]

\item $\Sigma$ is a genus $g$ surface with $b$ boundary components,

\item each of $\alpha=\{\alpha_1, \ldots, \alpha_{g-p}, \widetilde{\alpha}_1, \ldots, \widetilde{\alpha}_{m-1}\}$ is a collections of $g-p+m-1$ disjoint, simple, closed curves such that $\alpha_1, \ldots, \alpha_{g-p} \subset \alpha$ are essential, and each $\widetilde{\alpha}_1, \ldots, \widetilde{\alpha}_{m-1} \subset \alpha$ is a separting curve;  $\beta$ and $\gamma$ are similarly defined, and

\item each triple $(\Sigma, \alpha, \beta), (\Sigma, \beta, \gamma), (\Sigma, \alpha, \gamma)$ is handleslide diffeomorphic to the standard diagram $(\Sigma, \epsilon, \delta)$ in \autoref{fig:standard}.

\end{enumerate}

\end{definition}

\begin{figure}[h!]{
\centering
\includegraphics[scale=.35]{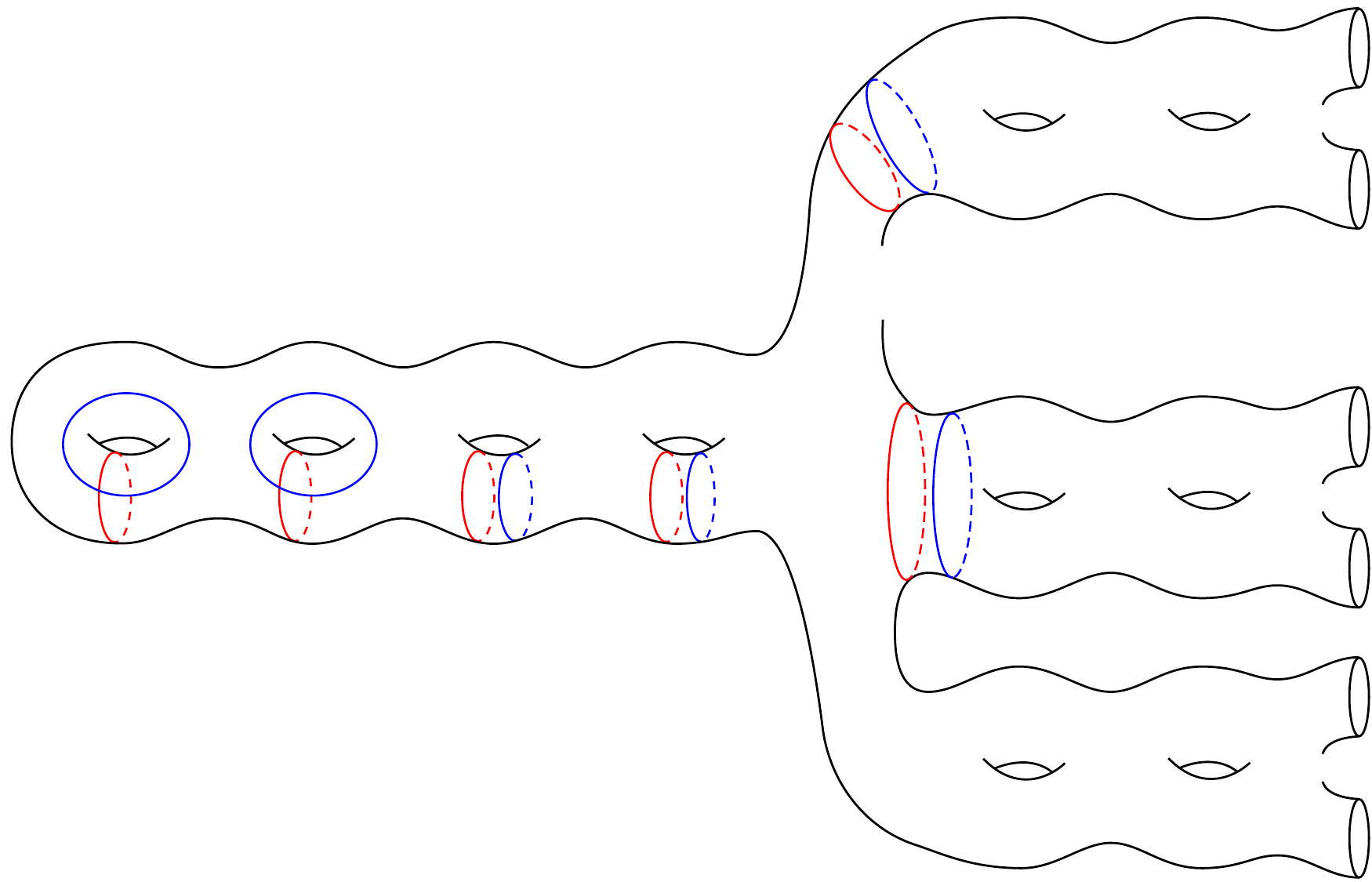}
\caption{Standard Position of $(\Sigma, \epsilon, \delta)$. There are $m-1$ separating curves, $\widetilde{\epsilon}_1, \ldots, \widetilde{\epsilon}_{m-1}$ and $\widetilde{\delta_1}, \ldots, \widetilde{\delta}_{m-1}$\label{fig:standard}. Each $\widetilde{\epsilon}_i$ and $\widetilde{\delta}_i$ separates $\Sigma$ into two pieces, one with curves and one without. The latter corresponds to the page of the induced open book on a boundary component of the corresponding $4$--manifold.}}
\end{figure}

\begin{theorem} There is a one-to-one correspondence between relatively trisected $4$--manifolds up to diffeomorphism and relative trisection diagrams up to handleslides and diffeomorphisms of the trisection surface. \end{theorem}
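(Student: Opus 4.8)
The plan is to construct explicit maps in both directions between relatively trisected $4$--manifolds and relative trisection diagrams, and to verify that they are mutually inverse modulo the stated equivalences, following the template of the closed case in \cite{gk} and the connected--boundary case in \cite{cgp}, with attention to the bookkeeping forced by the separating curves and the multiple open books of \autoref{lem:obd}.

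\textbf{From a trisected manifold to a diagram.} Given a relative trisection $X = X_1 \cup X_2 \cup X_3$, set $\Sigma = X_1 \cap X_2 \cap X_3$; by the definition of a trisection each $\varphi_i$ identifies $\Sigma$ with the genus $g$, $b$--boundary--component surface $Y^+_{g,k} \cap Y^-_{g,k}$, and each pairwise intersection $H_{i,i+1} = X_i \cap X_{i+1}$ with the standard relative compression body from that surface down to the disjoint union of pages $P = \sqcup_i P_i$. Choosing complete systems of compressing disks for $H_{12}$, $H_{23}$, $H_{31}$ and taking their boundary curves on $\Sigma$ produces the three systems $\beta$, $\gamma$, $\alpha$. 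Reading off the model $Z_k$ shows these systems have the required cardinalities and that exactly $m-1$ curves in each are separating, each cutting off a copy of $P_i$, consistent with \autoref{lem:obd}. Finally, each pair $(\Sigma,\alpha,\beta)$, $(\Sigma,\beta,\gamma)$, $(\Sigma,\alpha,\gamma)$ is handleslide--diffeomorphic to the standard diagram of \autoref{fig:standard}, since $X_i \cup X_{i+1}$ is diffeomorphic, rel the relevant vertical and horizontal boundary data, to the standard genus $g$ piece whose central surface and two bounding compression bodies realize exactly that diagram.

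\textbf{From a diagram to a trisected manifold.} Conversely, given $(\Sigma,\alpha,\beta,\gamma)$, condition (3) lets us build three copies of $Z_k$: for instance the middle piece is obtained by thickening $\Sigma$ to $\Sigma \times I$ and attaching $4$--dimensional $2$--handles along $\beta$ and $\gamma$ (after handleslides bringing the pair into the standard position of \autoref{fig:standard}), the handleslide--diffeomorphism guaranteeing the result is $\natural^k S^1 \times B^3$ with the correct decomposition of its boundary into $Y^\pm_{g,k}$ and $Y^0_m$. One then glues the three pieces along the $\#^k S^1 \times S^2$ summands of their boundaries, matching the $Y^+$ sector of one to the $Y^-$ sector of the next and the $Y^0_m$ sectors to each other; condition (3) is exactly what makes these gluings simultaneously possible, and the result is a $4$--manifold carrying a relative trisection whose induced diagram, by construction, is the given one (the tautological compressing disks are the cocores of the attached handles).

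\textbf{Well--definedness and bijectivity.} It remains to check the two constructions are inverse up to the stated equivalences. The diagram produced from a trisection is independent of the choices: changing the systems of compressing disks alters each curve system by handleslides, and changing the $\varphi_i$ alters it by a diffeomorphism of $\Sigma$. The composite diagram $\to$ manifold $\to$ diagram returns the original diagram as noted above. The composite manifold $\to$ diagram $\to$ manifold returns $X$ with its trisection because the rebuilt pieces are diffeomorphic to the original $X_i$, and the gluing maps agree with the original ones up to isotopy. The hard part is precisely this last point: one needs a \emph{relative Laudenbach--Poénaru theorem}, namely that every self--diffeomorphism of $\partial Z_k$ preserving the decomposition into $Y^+_{g,k}$, $Y^0_m$, $Y^-_{g,k}$ and restricting suitably to the pages extends over $Z_k = \natural^k S^1 \times B^3$, the analogue of the classical fact that diffeomorphisms of $\#^k S^1 \times S^2$ extend over $\natural^k S^1 \times B^3$. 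I expect this extension result, together with careful tracking of the horizontal boundary $N$ and of the monodromy data of \autoref{lem:obd}, to be the only genuine obstacle; everything else is a translation of the closed--manifold bijection of \cite{gk} and the connected--boundary bijection of \cite{cgp} into the multiple--boundary setting.
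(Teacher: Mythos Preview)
Your direction ``trisection $\to$ diagram'' matches the paper's. The reverse direction, however, is organised quite differently, and the difference is exactly where you locate the hard step.

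The paper does \emph{not} build three copies of $Z_k$ and glue them. Instead it builds a single ``spine'' from the centre outward: start with $\Sigma \times B^2$, attach the three thickened compression bodies $C_\alpha \times I$, $C_\beta \times I$, $C_\gamma \times I$ along three arcs of $\partial B^2$, then attach copies of $P_i \times I \times I$ between consecutive compression bodies (the uniqueness of this step is quoted from \cite[Corollary~14]{cgp}), and finally a neighbourhood of the binding. The result is a $4$--manifold with $m+3$ boundary components: $m$ of them already carry the open books, and the remaining three are each $\#^k S^1 \times S^2$. Now the \emph{classical} Laudenbach--Po\'enaru theorem fills those three uniquely, and the trisection is the obvious tripartition of this construction.

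Your alternative---assemble three model pieces $Z_k$ and glue along the $Y^\pm_{g,k}$ sectors---is conceptually natural but forces you to confront a \emph{relative} Laudenbach--Po\'enaru statement: that any self--diffeomorphism of $\partial Z_k$ respecting the decomposition $Y^+_{g,k}\cup Y^0_m\cup Y^-_{g,k}$ extends over $Z_k$. You flag this honestly as ``the only genuine obstacle'', but it is a real one: it is not in the literature in this form, and it is precisely what the paper's spine construction is designed to avoid. By building from the centre, the paper arranges that the only filling ambiguity lives on closed $\#^k S^1\times S^2$ boundaries, where the classical theorem applies directly. So your proposal is not wrong in spirit, but as written it trades an available tool for one you would still have to prove; the paper's route is more economical. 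A minor additional point: your description of building the ``middle piece'' by attaching $4$--dimensional $2$--handles to $\Sigma \times I$ along $\beta$ and $\gamma$ is dimensionally off ($\Sigma\times I$ is $3$--dimensional) and, once corrected to $\Sigma\times D^2$, still does not immediately yield $Z_k$ with the right boundary decomposition---some of the page--times--interval and binding pieces are missing from that description.
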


\begin{proof}

To obtain a relative trisection diagram from a relatively trisected $4$--manifold, we recall the above construction of each $X_i$ in the trisection. We define $\Sigma$ to be a genus $g$ surface with $b$ boundary components, diffeomorphic to the trisection surface $X_1\cap X_2 \cap X_3$. Define $\alpha\subset \Sigma$ to be the simple closed curves corresponding to the boundaries of compressing disks in the compression body $X_1\cap X_2$; similarly, let $\beta$ correspond to the boundaries of compression disks in $X_2 \cap X_3$, and $\gamma$ correspond to those in $X_1\cap X_3$.

Conversely, suppose $(\Sigma, \alpha, \beta, \gamma)$ is a $(g,k;p_1, b_1; \ldots; p_m,b_m)$ relative trisection diagram. We follow the construction in \cite{gk} for closed $4$--manifolds, which was adapted to relative trisections with connected boundary in \cite{cgp}. Let $\Sigma_\alpha$ denote the collection of $m$ surfaces obtained by performing surgery on $\Sigma$ along $\alpha$; $\Sigma_\beta$ and $\Sigma_\gamma$ are similarly defined. Note that $\Sigma_\alpha \cong \Sigma_\beta \cong \Sigma_\gamma \cong P := \underset{i=1}{\overset{m}{\sqcup}} P_i$, where each $P_i$ is a fixed genus $p_i$ surface with $b_i$ boundary components. Let us fix identifications of each of our surgered surfaces with $P$. Let $C_\alpha, C_\beta, C_\gamma$ be the $3$--dimensional cobordisms, known as compression bodies, from $\Sigma$ to $\Sigma_\alpha, \Sigma_\beta, \Sigma_\gamma$ respectively. Since $\Sigma$ is a surface with boundary, the ``ends'' of our cobordism $\Sigma\sqcup \Sigma_\alpha$ only form a portion of $\partial C_\alpha$. The remainder of $\partial C_\alpha$ is the ``sides'' of the cobordism, $\partial_s C_\alpha \cong \partial P\times [0,1].$

Let $a,b,c \in\partial B^2$ with disjoint, closed regular neighborhoods $N_a, N_b, N_c \subset \partial B^2$, each of which can be identified with the unit interval. Let $I_{ab} \subset \partial B^2$ be the curve whose boundary is ${a,b}$; $I_{bc}$ and $I_{ac}$ are similarly defined. Attach $C_\alpha \times I$ to $\Sigma \times B^2$ so that $\Sigma \times I \subset C_\alpha \times I$ is identified with $\Sigma \times N_a$. We similarly attach $C_\beta \times I$ and $C_\gamma \times I$ along $\Sigma \times N_b$ and $\Sigma \times N_c$ respectively. \autoref{fig:tripod} gives a schematic of our current intermediate $4$--manifold, denoted by $X^\circ$.

\begin{figure}[h!]
\centering
\fontsize{7pt}{7pt}
\def\svgwidth{0.4\textwidth}
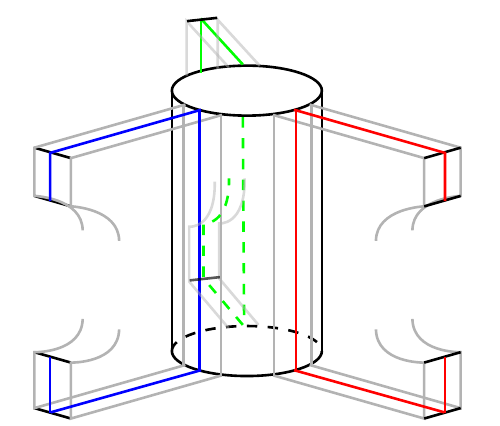

\caption{The intermediate stage of constructing a unique $4$--manifold with $m$ boundary components from a relative trisection diagram. $\Sigma \times B^2$ is depicted by the central cylinder from which the three compression bodies protrude.
\label{fig:tripod}}

\end{figure}

Having fixed identifications of $\Sigma_\alpha$ and $\Sigma_\beta$ with $P$, there is a correspondence between the components of $\Sigma_\alpha$ and $\Sigma_\beta$. For each $i=1, \ldots, m$, we attach $P_i \times I \times I$ to $X^\circ$ so that $P_i \times I \times \{0\}$ is attached to the corresponding component of $\Sigma_\alpha \times I$ in the $C_\alpha$ protrusion of $X^\circ$ and $P_i\times I \times\{1\}$ is attached to the corresponding component of $\Sigma_\alpha\times I$ in the $C_\beta$ protrusion. Note that a correspondence exists between any pair of $\Sigma_\alpha, \Sigma_\beta$, and $\Sigma_\gamma$. As such, we attach $C_\beta$ to $C_\gamma$ and $C_\gamma$ to $C_\alpha$ with copies of $P\times I \times I$. We refer the reader to \cite[Corollary 14]{cgp} for the proof that there is a unique way to attach each $P_i \times I \times I$. Attaching the three thickened copies of $P_i \times I$ induces a surface bundle over $S^1$. Furthermore, notice that for each $i$ there is a copy of $\partial P_i \times S^1$ in the boundary of our smoothable $4$--manifold: 
\begin{equation}\label{eq:1}
\partial P_i \times \left( \partial_s C_\alpha \cup I_{ab} \cup \partial_s C_\beta\right).
\end{equation} 
Fixing an identification of \ref{eq:1} with $\partial P_i \times S^1$, we attach disjoint solid tori $\underset{j=1}{\overset{b_i}{\sqcup}} (S^1 \times \partial B^2)$, one for each boundary component of $P_i$, so that $\{x\} \times S^1$ bounds a disk for each $x \in \partial P_i$. These disks correspond to a third of the unit disk, as discussed in \autoref{sec:basicdefs}, and together comprise a neighborhood of $\partial P$.

In the case of $m=1$, the resulting $4$--manifold has four boundary components: one boundary component is a $3$--manifold equipped with an open book decomposition, and three boundary components are diffeomorphic to $\#^k S^1 \times S^2$ for some $k>0$. We then implement a result of Laudenbach and Poenaru which states that there is a unique way to fill $\#^k S^1\times S^2$ with $\natural ^kS^1 \times B^3$ \cite{LP} to obtain a $4$--manifold whose connected boundary is equipped with an open book decomposition by construction. For $m>1$, we have $m$ boundary components equipped with open books, and $3$ boundary components diffeomorphic to $\#^{k} S^1 \times S^2$. Again, these three components can be uniquely filled to obtain a $4$--manifold $X$ with $m$ boundary components, each accompanied with an induced open book.

\end{proof}

\end{section}

%%%%%%%%%%%%%%%%%%%%%%%%%%%%%%%%%

%%%%	GLUING RELATIVE TRISECTIONS  %%%%%%%%%

%%%%%%%%%%%%%%%%%%%%%%%%%%%%%%%%%

\begin{section}{Gluing Relative Trisection Diagrams}

Before discussing the gluing theorem, we first recall the algorithm described in~\cite[Theorem 5]{cgp} for determining the monodromy of an open book determined by a relative trisection diagram; this works without modification in the case of multiple boundary components and is essential for drawing the diagram of a trisection obtained by gluing two relatively trisected $4$--manifolds along diffeomorphic boundary components. 

% Let $\mathcal{A}_\alpha = \{a_1, \ldots, a_l\}$ be a collection of properly embedded arcs in the surface $\Sigma_\alpha$. Choose a collection of $l$ properly embedded arcs in the surface $\Sigma$ \emph{disjoint from $\alpha$} such that the image under the surgery map $\Sigma \rightarrow \Sigma_\alpha$ is $\mathcal{A}_\alpha$. As such, 
Recall that $\Sigma_\alpha$ is the surface obtained from $\Sigma$ by surgery along $\alpha$. A collection $a$ of properly embedded arcs in $\Sigma$ disjoint from $\alpha$ is called a \emph{cut system of arcs for $(\Sigma,\alpha)$} if $a$ descends to a collection $a_\alpha$ of properly embedded arcs in $\Sigma_\alpha$ which cut each component of $\Sigma_\alpha$ into a disk. 

%\begin{lemma} Let 
%\begin{itemize}
%\item $\Sigma$ be an orientable surface with boundary, 
%\item $\beta$ a set of curves in $\Sigma$ such that surgery on $\Sigma$ along $\beta$ produces another surface with boundary $\Sigma_\beta$, 
%\item $\beta'$ a set of curves in $\Sigma$ handle-slide equivalent to $\beta$,
%\item $A'$ a set of arcs in $\Sigma$ that are disjoint from $\beta'$ and that cut $\Sigma_\beta'$ into a disk,
%\end{itemize}
%Then there exists ...
%a set of arcs $A$ in $\Sigma$ disjoint from beta cutting $\Sigma_\beta$ into a disk and a sequence of handle slides taking the pair $(\beta?, A?)$ to the pair $(\beta, A)$.
%\end{lemma}
%
%(Have we said anywhere that "handle slide" means "slide curve over curve or arc over curve"? i.e. we don't include "arc slides"?)
%
%\begin{proof} Let $\beta_i$ be the result of sliding $\beta'_l$ over $\beta
%
%a_i be an element of A, ?'_j an element of ?', and suppose there exists a point x in the intersection a_i \cap ?'_j. Then what?
%
%
%In the sequence of slides transforming beta' to beta, each time we slide a closed curve x over another curve y, any arc z that is in the way of the slide can first be slid over y to get it out of the way.
%\end{proof}

The algorithm for obtaining the monodromy for the induced open book decomposition gives a well defined procedure, given a cut system of arcs $a$ for $\alpha$, to produce cut systems of arcs $b$, $c$ and $\hat{a}$ for $\beta$, $\gamma$ and $\alpha$, respectively, such that the monodromy map $\mu:\Sigma_\alpha \rightarrow \Sigma_\alpha$ is the unique map such that $\mu(a_\alpha)= \hat{a}_\alpha$; $\mu$ actually factors as a map $\Sigma_\alpha \to \Sigma_\beta \to \Sigma_\gamma \to \Sigma_\alpha$ taking $a_\alpha$ to $b_\beta$ to $c_\gamma$ to $\hat{a}_\alpha$.

(An astute reader might notice that the algorithm in ~\cite{cgp} produces a cut system of arcs $b$ for a system of curves which is handle slide equivalent to $\beta$, not for $\beta$ itself, and similarly for $c$ in relation to $\gamma$ and $\hat{a}$ in relation to $\alpha$. However, each arc system can be turned back into an arc system for the original system of curves since, any time one slides a closed curve $x$ over another curve $y$, if an arc $z$ from the corresponding arc system gets in the way, then $z$ can first be slid over $y$ to get it out of the way.)

Suppose $(\Sigma, \alpha,\beta, \gamma)$ and $(\Sigma',\alpha', \beta',\gamma')$ are relative trisection diagrams which correspond to $4$--manifolds $X$ and $X'$ with diffeomorphic boundaries. It is natural to ask how these relative trisection diagrams relate to a relative trisection diagram of the closed $4$--manifold $X \underset{\partial}{\cup} X'$. Of course the induced structures on the bounding $3$--manifold must be compatible. That is, if $(P, \mu)$ and $(P', \mu')$ are the open books induced by $(\Sigma, \alpha,\beta, \gamma)$ and $(\Sigma',\alpha', \beta',\gamma')$ respectively, there must exist an orientation reversing diffeomorphism $f:P \to P'$ such that  $f\circ\mu\circ f^{-1}=\mu'$. The function $f$ gives an identification between $\partial \Sigma$ and $\partial \Sigma'$, which we use to define the closed genus $G=g+g'+b-1$ surface $S = \Sigma \underset{\partial}{\cup} \Sigma'$. Once we glue, we omit the separating curves $\widetilde{\alpha}, \widetilde{\beta},  \widetilde{\gamma}$. The resulting collections $\alpha\cup\alpha', \beta\cup\beta', \gamma\cup\gamma'$ each contain $G-l$ curves. To account for the missing curves, we use the cut systems of arcs. Since we have the freedom to choose our initial cut systems of arcs $a\subset\Sigma$ and $a'\subset\Sigma'$, we choose $a'$ so that $\partial a'_i=f(\partial a_i)$. We denote the newly formed curves $\alpha^*_i=a_i \underset{\partial}{\cup} a'_i$. From $a$, we obtain $b$ and $c$ by using the algorithm mentioned above, in~\cite[Theorem 5]{cgp}, and similarly obtain $b'$ and $c'$, and define $\beta^*_i=b_i \cup b'_i$ and $\gamma^*_i=c_i\underset{\partial}{\cup}c'_i$. We now have three collections of $G$ curves on $S$, \begin{equation}\label{eq:curves-glue}\overline{\alpha}=\alpha \cup \alpha'\cup \alpha^*,\: \overline{\beta}=\beta \cup\beta'\cup\beta^*,\:\overline{\gamma}=\gamma\cup\gamma'\cup\gamma^*.\end{equation} In this case, when we glue along \emph{every} induced open book decomposition, we obtain a $(G,K)$--trisection diagram, where $K=k+k'-l$. %This implies that the new curves (pairwise) give rise to Heegaard diagrams which contribute $S^1\times S^2$ summands. \juanita{(I would like to remove this last sentence, I don't think it adds to the exposition.)}

The gluing theorem applies in greater generality, allowing us to glue together trisection (diagrams) along \emph{any number} of boundary components (possibly less than $m$), resulting in another relative trisection (diagram). Although the indexing becomes much more involved, the general idea remains the same. We glue the surfaces $\Sigma$ and $\Sigma'$ together along boundary components which correspond to the bindings of compatible open book decomposition, using the associated cut system of arcs to account for the missing number of curves on the resulting surface.

\begin{theorem} The tuple $(S, \overline{\alpha}, \overline{\beta}, \overline{\gamma})$, with $\overline{\alpha}, \overline{\beta}, \overline{\gamma}$ as in \ref{eq:curves-glue}, is a trisection diagram for $X \cup X'$. \end{theorem}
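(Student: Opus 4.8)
The plan is to verify directly that $(S,\overline\alpha,\overline\beta,\overline\gamma)$ satisfies the definition of a trisection diagram, by identifying the three compression bodies it determines with the three pieces of a genuine trisection of $X\cup X'$. Since we already know (by the correspondence theorem in the previous section, the $m=1$ or general case) that $(\Sigma,\alpha,\beta,\gamma)$ and $(\Sigma',\alpha',\beta',\gamma')$ yield relative trisections $X=X_1\cup X_2\cup X_3$ and $X'=X_1'\cup X_2'\cup X_3'$, the natural candidate for a trisection of $X\cup X'$ is $\widehat X_i := X_i\cup_{\Sigma\text{-side glue}} X_i'$ for $i=1,2,3$, where the gluing of $X_i$ to $X_i'$ along the portion of their boundaries lying in $\partial X$ respectively $\partial X'$ is dictated by the orientation-reversing diffeomorphism $f\colon P\to P'$ intertwining the monodromies (and its extension over the bindings). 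The first step is to check that each $\widehat X_i$ is again a handlebody $\natural^K S^1\times B^3$: each $X_i$ and $X_i'$ is a handlebody whose ``vertical'' boundary containing the page is $P\times I$ (resp.\ $P'\times I$), and gluing two such handlebodies along $P\times I\cong P'\times I$ is, up to the Laudenbach–Poénaru filling of the $\#^kS^1\times S^2$ pieces, again a handlebody; a Mayer–Vietoris / Euler characteristic count gives $K=k+k'-l$, matching the genus bookkeeping $G=g+g'+b-1$ that forces $S$ to have genus $G$ and the curve collections to have $G$ curves after reinstating the $G-l$ old curves plus the $l$ new ones $\alpha^*,\beta^*,\gamma^*$ (and here $b=\sum b_i=l+$ (number of components adjustments) must be reconciled—this is part of the indexing).

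The second and crucial step is to identify the compression bodies. On the glued surface $S=\Sigma\cup_f\Sigma'$, surgery along $\overline\alpha=\alpha\cup\alpha'\cup\alpha^*$ must yield the trisection surface $\widehat X_1\cap\widehat X_2\cap\widehat X_3$ compressed down appropriately: surgery on $\Sigma$ along $\alpha$ gives $P$, surgery on $\Sigma'$ along $\alpha'$ gives $P'$, and then surgery along the arcs $\alpha^*_i=a_i\cup_\partial a'_i$—which by the definition of a cut system of arcs cut $P$ (resp.\ $P'$) into disks—has the effect of gluing $P$ to $P'$ along $f$, i.e.\ capping off to yield the closed surface $\Sigma_\alpha\cup_f\Sigma'_{\alpha'}$ that is the relevant closed surface after omitting the separating curves. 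The key technical point is the compatibility built into the arc algorithm: because $a'$ was chosen with $\partial a'_i=f(\partial a_i)$, and because the monodromy algorithm produces $b,c,\hat a$ from $a$ (and $b',c',\hat a'$ from $a'$) in a way compatible with $f\circ\mu\circ f^{-1}=\mu'$, the three compression bodies $C_{\overline\alpha},C_{\overline\beta},C_{\overline\gamma}$ in $S$ are exactly $C_\alpha\cup C_{\alpha'}$ glued along the product region, etc., and these agree with $\widehat X_i\cap\widehat X_{i+1}$. One then checks each pair $(S,\overline\alpha,\overline\beta)$, $(S,\overline\beta,\overline\gamma)$, $(S,\overline\alpha,\overline\gamma)$ is a genus-$G$ Heegaard diagram of $\#^KS^1\times S^2$—equivalently handleslide-diffeomorphic to the standard genus-$G$ diagram with $K$ curves—using that the pieces before gluing were standard (handleslide-equivalent to the standard relative diagrams) and that the arcs $\alpha^*,\beta^*,\gamma^*$ are mutually ``standard'' relative to one another by the arc algorithm; this is where one invokes the factorization $\mu\colon\Sigma_\alpha\to\Sigma_\beta\to\Sigma_\gamma\to\Sigma_\alpha$ to see the three new curve families sit in standard position with respect to each other.

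The main obstacle I anticipate is precisely this last compatibility: showing that the newly created curves $\alpha^*_i,\beta^*_i,\gamma^*_i$, built by concatenating cut-system arcs across the gluing, sit in \emph{standard position} relative to each other and to the surviving curves—i.e.\ that $(S,\overline\alpha,\overline\beta)$ really is handleslide-equivalent to the standard diagram, not merely a diagram of $\#^KS^1\times S^2$. This requires carefully tracking how the monodromy-matching condition $f\mu f^{-1}=\mu'$ translates, via the arc algorithm of \cite[Theorem 5]{cgp}, into the statement that $c_\gamma$ maps to $\hat a_\alpha$ on the $\Sigma$ side and $c'_{\gamma'}$ maps to $\hat a'_{\alpha'}$ on the $\Sigma'$ side in a way that glues up coherently along $f$; the parenthetical remark in the excerpt about sliding arcs out of the way when sliding curves is exactly the lemma one needs to promote ``handleslide-equivalent arc system'' to ``arc system for the original curves,'' and it must be applied uniformly across the gluing. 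Once that bookkeeping is in place, the remaining verifications—that $\widehat X_i\cap\partial(X\cup X')=\emptyset$ since we glued along \emph{all} induced open books, that the indices behave mod $3$, and that $K=k+k'-l$, $G=g+g'+b-1$—are routine Euler-characteristic and Mayer–Vietoris computations.
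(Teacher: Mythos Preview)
Your proposal is broadly correct and shares the paper's key technical point---that each pair $(\Sigma,\alpha\cup a,\beta\cup b)$ (and the analogous pairs) is handleslide-equivalent to a standard diagram with arcs, and that these standardizing slides on $\Sigma$ and $\Sigma'$ glue to give standardizing slides on $S$. However, your overall architecture differs from the paper's. You build the trisection $\widehat X_i=X_i\cup X_i'$ geometrically first, verify each $\widehat X_i$ is a $4$--dimensional handlebody, identify the compression bodies, and only then check the diagram. The paper runs the logic in the opposite order: it verifies purely diagrammatically that $(S,\overline\alpha,\overline\beta,\overline\gamma)$ is a valid trisection diagram (via the glued handleslide argument you also describe), and then observes that since this diagram contains $(\Sigma,\alpha,\beta,\gamma)$ and $(\Sigma',\alpha',\beta',\gamma')$ as relative subdiagrams, the resulting closed $4$--manifold must be $X\cup X'$. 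This second step replaces your entire geometric verification that $\widehat X_i\cong\natural^K S^1\times B^3$, which is the main efficiency gain.

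Two small cautions about your route. First, your invocation of Laudenbach--Po\'enaru in the step ``gluing two such handlebodies along $P\times I$ is, up to the Laudenbach--Po\'enaru filling, again a handlebody'' is misplaced: Laudenbach--Po\'enaru concerns filling $\#^k S^1\times S^2$ boundaries, not gluing handlebodies along thickened pages. The correct argument that $\widehat X_i$ is a handlebody comes from the explicit model $Z_k=U\natural V_n$ with $U=P\times D$: gluing the $P\times D$ pieces along their $Y^0$ portions (using the compatibility $f\mu f^{-1}=\mu'$) reassembles a $P\times D^2$ factor, and one reads off $K$ directly. Second, your parenthetical ``$b=\sum b_i=l+\text{(adjustments)}$ must be reconciled'' signals uncertainty in the index bookkeeping; in the paper's approach this bookkeeping is absorbed into the observation that the glued standardizing slides automatically put $(S,\overline\alpha,\overline\beta)$ in standard form of the correct genus and rank, so no separate Euler-characteristic count is needed.
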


For details in the diagrammatic case of $m=1$, the reader is referred to \cite{co}. When gluing relatively trisected manifolds with $m>1$ see \cite{castro}. Here we sketch the general proof.

\begin{proof}
First we show that $(S, \overline{\alpha}, \overline{\beta}, \overline{\gamma})$ is a trisection diagram. This follows from the fact that the Heegaard diagram with arcs $(\Sigma,\alpha \cup a, \beta \cup b)$ is handle slide equivalent to a standard diagram with arcs (where closed curves slide over closed curves and arcs slide over closed curves), and the same is true for the other two diagrams with arcs on $\Sigma$, and the three diagrams with arcs on $\Sigma'$. The standardizing slides then fit together to give standardizing slides for the three pairs $(S, \overline{\alpha}, \overline{\beta})$, $(S, \overline{\beta}, \overline{\gamma})$ and $(S, \overline{\gamma}, \overline{\alpha})$. Having established that we have a legitimate diagram, the fact that it contains as subdiagrams relative diagrams for $X$ and $X'$ shows that the closed $4$--manifold is in fact $X \cup X'$.
\end{proof}

\end{section}
\section{Trisecting product cobordisms}

\label{sec:products}

Note that the gluing theorem allows us to define a more refined bordism category, $\textbf{TRI}$, whose objects are $3$--manifolds equipped with open book decompositions, and morphisms are relatively trisected cobordisms. A curious feature, however, is that a product $[-1,1] \times M^3$ does not have an obvious ``product'' trisection. Here we show how to draw (unbalanced) trisection diagrams for such products. Given a closed, oriented $3$--manifold $M$ with an open book decomposition with page $P$ and monodromy $\mu: P \to P$, fix a $2$--dimensional handle decomposition of $P$ with one $0$--handle and $l=1-\chi(P)$ $1$--handles. Let $\{a_i\}$ be the co-cores of the $1$--handles (properly embedded arcs in $P$) and let $b_i = \mu(a_i)$ be their images under the monodromy. The diagram will be described in terms of this data.

Let $P'$ and $P''$ be copies of $P$ in the interior of $P$ obtained as deformation retracts along collar neighborhoods of $\partial P$, with the property that the $0$--handle of $P''$ is contained in the interior of the $0$--handle of $P'$ but the $1$--handles of $P''$ are disjoint from the $1$--handles of $P'$. Let $D$ be the $0$--handle of $P''$. Let $b'_i$ be the arcs $b_i$ as properly embedded in $P'$, isotoped rel. boundary so as to avoid $D$. Let $c_i$ be the cores of the $1$--handles of $P''$, with endpoints on $\partial D$. See \autoref{F:PageSetup}; in this example the $3$--manifold $M$ is $S^3$.
\begin{figure}[h!]
\centering
% \labellist
% 	\pinlabel \resizebox{1.2cm}{!}{$\Sigma_{g,b} \times D^2$} at 70 96
% 
% 	\pinlabel $\vdots$ at 114 41
% 
% 	\pinlabel $\vdots$ at 29 42
% \endlabellist
\includegraphics[scale=0.7]{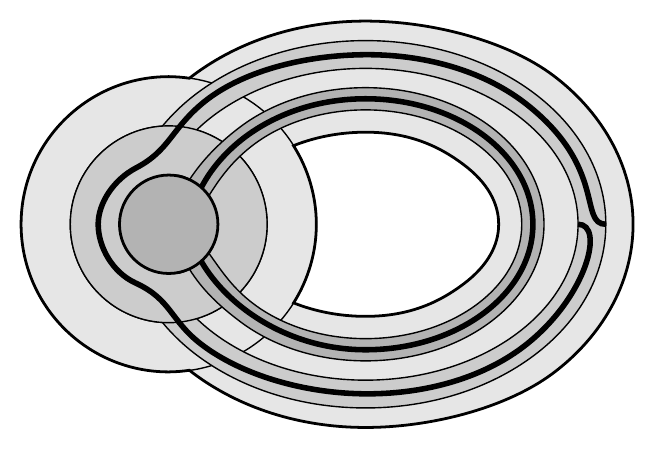}
\caption{Setting up the page. \label{F:PageSetup}}
\end{figure}

We will first draw a Kirby diagram for $[-1,1] \times M$, seen as a cobordism from $\{-1,1\} \times [0,1] \times P$ to $\{-1,1\} \times [0,1] \times P$. This diagram is {\em drawn on} $\{-1,1\} \times P = -P \amalg P$. The diagram has two $B^3$'s for the feet of a single $4$--dimensional $1$--handle, a framed link of $2l$ components, and a $3$--handle which is not drawn. The two $B^3$'s intersect $-P$ and $P$, respectively, as the disk $D$, the $0$--handle of $P''$. Of the $2l$ components of the $2$--handle attaching link, $l$ of them appear as two copies of each of the cores $c_1, \ldots, c_l$ of the $1$--handles of $P''$, each component running over the $4$--dimensional $1$--handle twice. This much is indicated in \autoref{F:KirbyDiagramPt1}.
\begin{figure}[h!]
% \labellist
% 	\pinlabel \resizebox{1.2cm}{!}{$\Sigma_{g,b} \times D^2$} at 70 96
% 
% 	\pinlabel $\vdots$ at 114 41
% 
% 	\pinlabel $\vdots$ at 29 42
% \endlabellist
\includegraphics[width=0.5\textwidth]{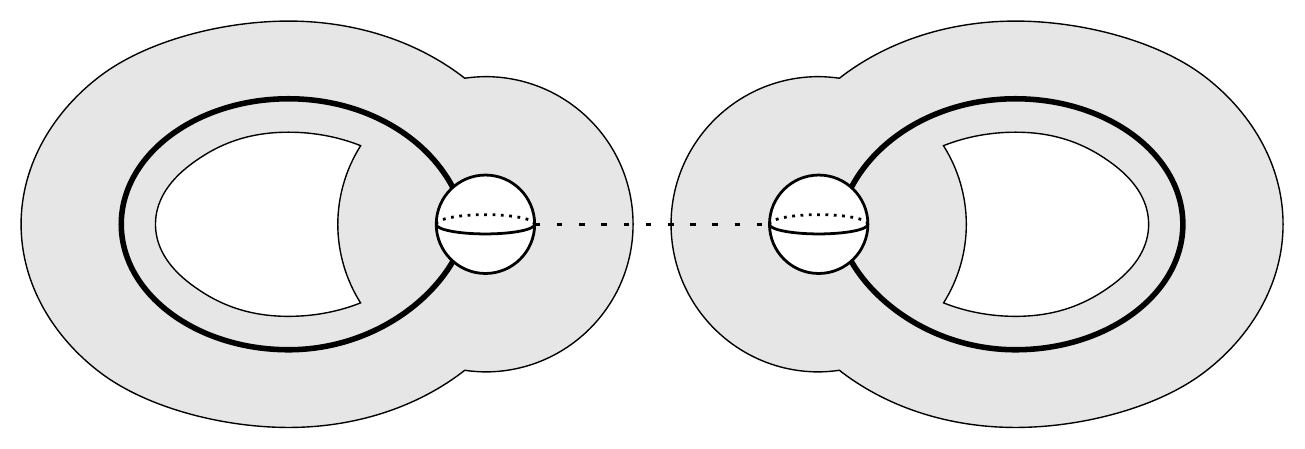}
\caption{The beginning of the Kirby diagram. \label{F:KirbyDiagramPt1}}
\end{figure}
The remaining $l$ components are obtained by completing each monodromy arc $b'_i$ so as to wrap around the $i$-th 1--handle of $P''$. Specifically, we take the union of $b'_i$ in $P' \subset P \subset -P \amalg P$, and an arc going around and behind the corresponding core curve $c_i$ as drawn in \autoref{F:KirbyDiagramPt2}.
\begin{figure}[h!]
\includegraphics[width=0.5\textwidth]{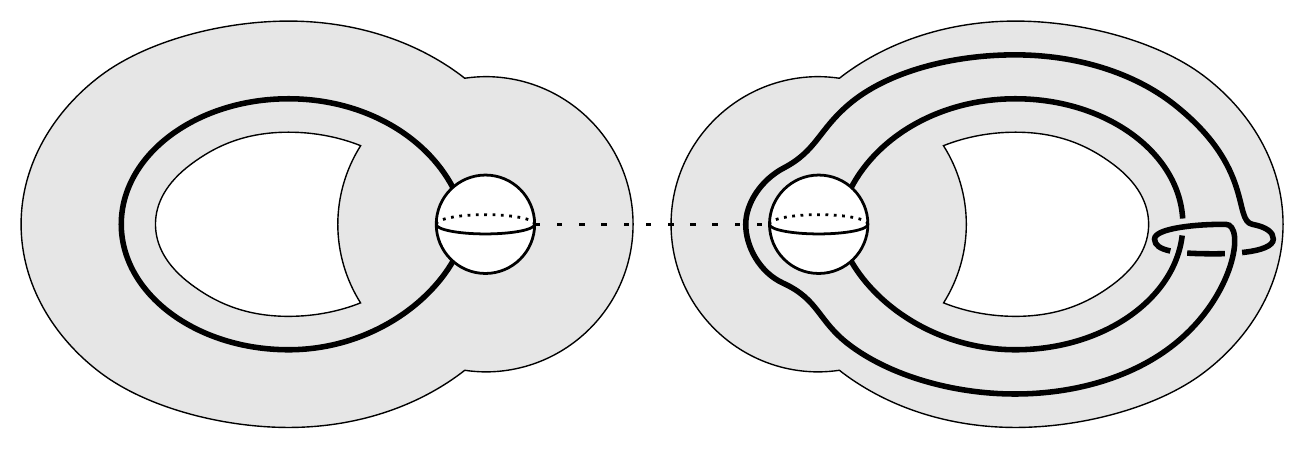}
\caption{The complete Kirby diagram. \label{F:KirbyDiagramPt2}}
\end{figure}
(Note that the ``tilt'' of the plane on which the arc going around and behind lies is important in order to arrange that the framing on this link component is exactly the blackboard framing as drawn.)

Before continuing to the trisection diagram we should discuss the meaning of this diagram briefly. This is a handle attaching diagram for a $4$--manifold built by attaching one $1$--handle, $2l$ $2$--handles and one $3$--handle to $([-2,-1] \amalg [1,2]) \times [0,1] \times P$ along $\{-1,1\} \times [0,1] \times P$, and the diagram is projected onto $\{-1,1\} \times P = -P \amalg P$. This will become a trisection diagram by turning $-P \amalg P$ into $-P \# P$, with the connected sum occuring along a tube running over the $4$--dimensional $1$--handle, and then adding extra genus (extra torus summands) to this surface so as to accommodate the crossings in the $2$--handle attaching link. The connected sum tube gets an $\alpha$ and a $\beta$ curve, parallel, since this records the $4$--dimensional $1$--handle. This much is shown in \autoref{F:TriDiagramPt1}.
\begin{figure}[h!]
\centering
\fontsize{7pt}{7pt}
\def\svgwidth{0.6\textwidth}
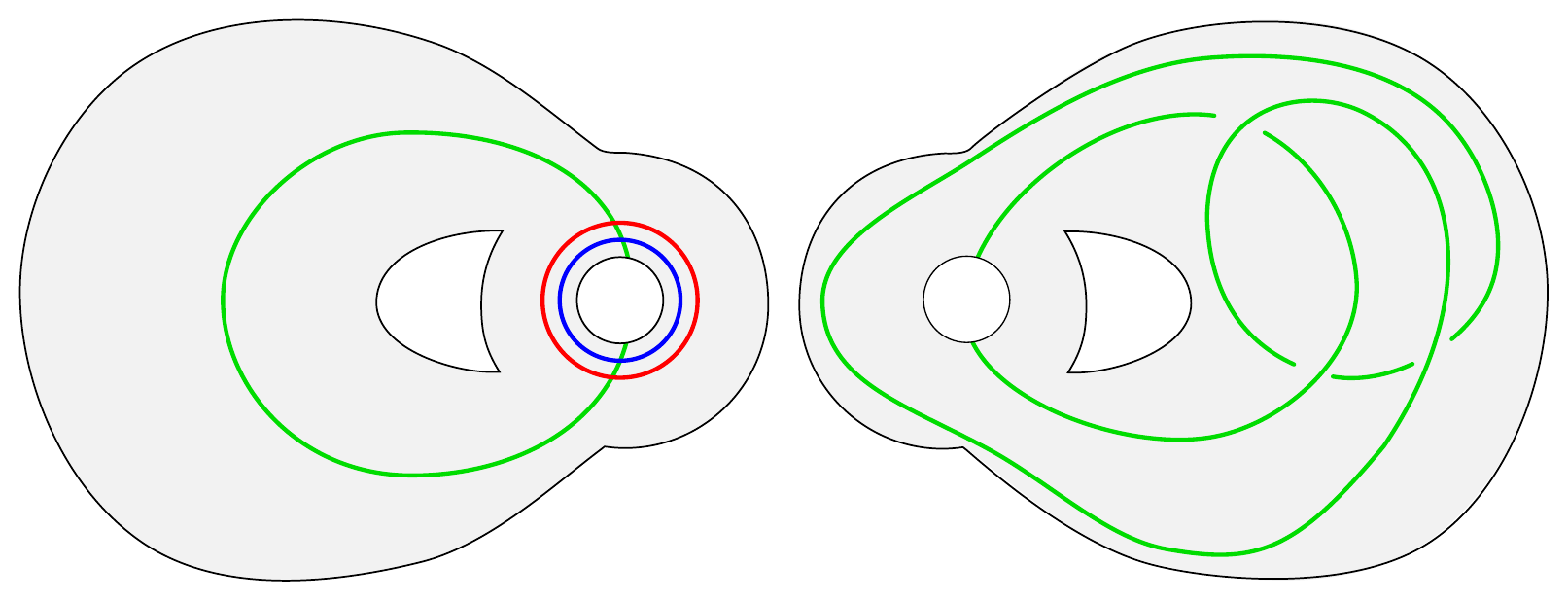
\caption{First step towards the trisection diagram. \label{F:TriDiagramPt1}}
\end{figure}
Each extra torus summand, coming from a stabilization of a Heegaard splitting, gets an $\alpha$ meridian and a $\beta$ longitude curve. Then the $2l$ components of the $2$--handle attaching link become $\gamma$ curves, as shown in \autoref{F:TriDiagramPt2}.
\begin{figure}[h!]
\centering
\fontsize{7pt}{7pt}
\def\svgwidth{0.6\textwidth}
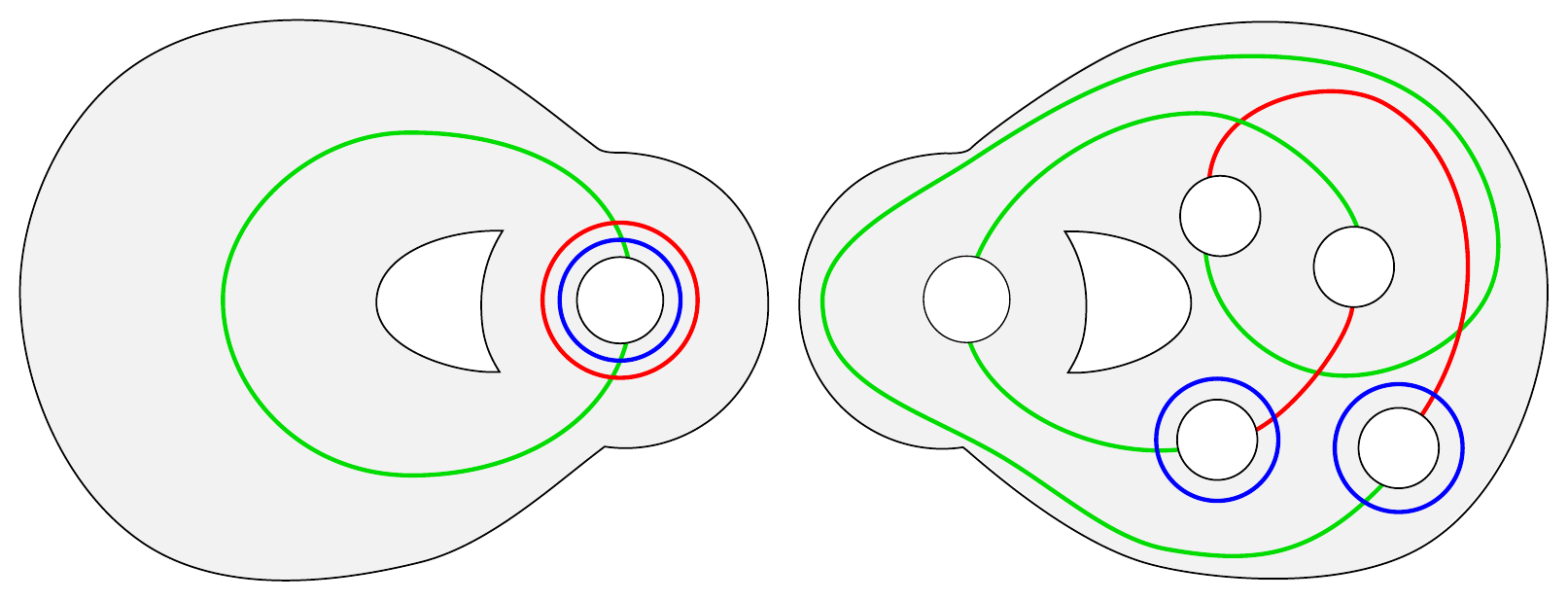
\caption{Second step towards the trisection diagram; we have some but not all of the $\gamma$ curves. \label{F:TriDiagramPt2}}
\end{figure}
The last part is to find the remaining $\gamma$ curves. In the example drawn, we only need one extra $\gamma$ curve. In general, each $\gamma$ curve we already have is dual to some $\beta$ curves, and the remaining $\gamma$ curves should be parallel to the remaining $\beta$ curves. However, this may not be possible with the $\beta$ curves as drawn, so we we need to slide the remaining $\beta$ curves over other $\beta$ curves until they become disjoint from given $\gamma$ curves, and then turn the resulting $\beta$ curves into $\gamma$ curves. In our example, the $\gamma$ curve obtained as two copies of the core $c_1$ is ``dual'' to both $\beta_A$  and $\beta_B$. The remaining $\gamma$ curve is obtained as a parallel copy of the result of sliding $\beta_A$ over $\beta_B$ twice with opposite signs. The final result is shown in \autoref{F:TriDiagramPt3}.
\begin{figure}[h!]
\centering
\fontsize{7pt}{7pt}
\def\svgwidth{0.6\textwidth}
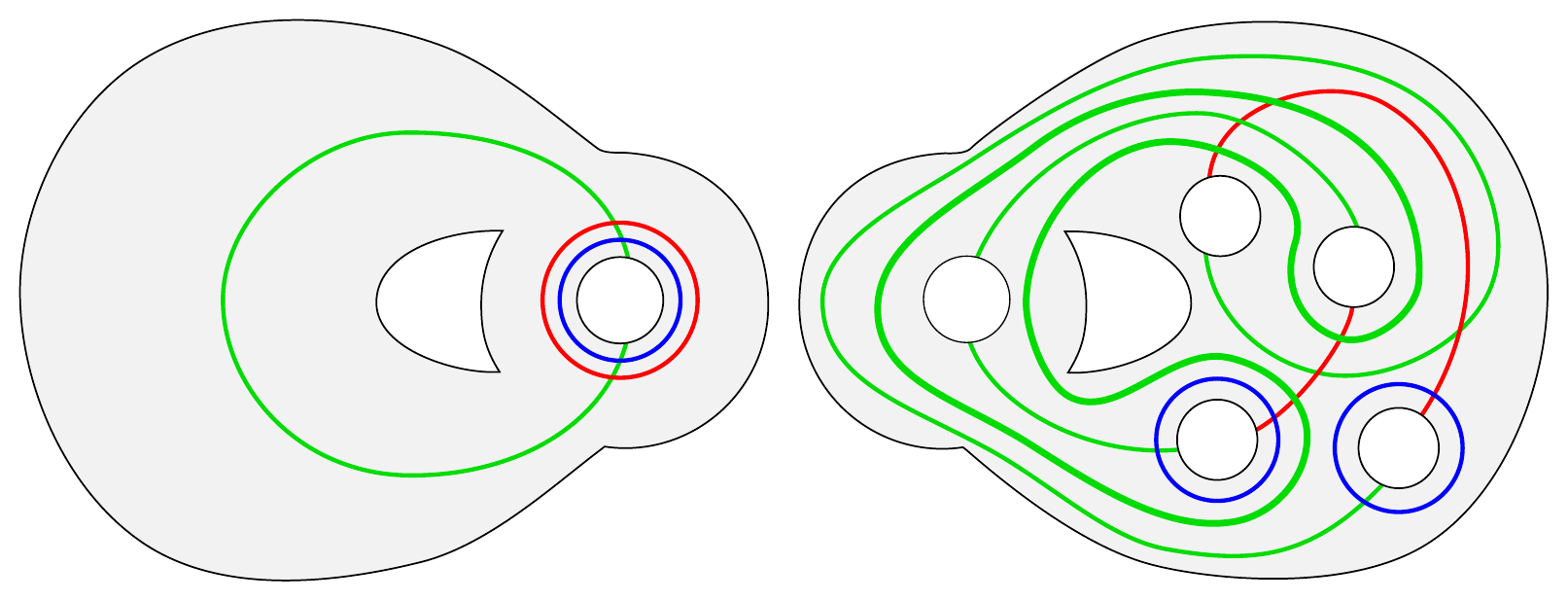
\caption{The complete trisection diagram. \label{F:TriDiagramPt3}}
\end{figure}

%
%% \pnasbreak splits and balances the columns before the references.
%% If you see unexpected formatting errors, try commenting out this line
%% as it can run into problems with floats and footnotes on the final page.
%\pnasbreak

\bibliographystyle{plain}
\bibliography{RelTriPNAS}

\end{document}